\newcommand{\NoBlackBoxes}{\global\overfullrule0pt}
 \def\@serieslogo{%
 \vbox to\headheight{%
 \parindent\z@ \fontsize{6}{7\p@}\selectfont
June 13, 2014\endgraf
 \vss}}}
\numberwithin{equation}{section}
\newcommand{\tauconv}{\stackrel{\lower0.2ex\hbox{$\scriptscriptstyle
                    \it{\tau} $}}{\rightarrow}}
\newcommand{\Mane}{\textrm{Ma\~{n}\'{e}}}
\def\N{\mathbf{N}}
\def\R{\mathbf{R}}	
\def\Prob{P}
\theoremstyle{theorem}
\newtheorem{theorem}{Theorem}[section]
\newtheorem{proposition}[theorem]{Proposition}
\theoremstyle{definition}
\begin{document}

\title[Viscosity Solutions of Hamilton-Jacobi equations]{Min-max representations of viscosity solutions of Hamilton-Jacobi equations and applications in rare-event simulation}      
\renewcommand{\thefootnote}{\fnsymbol{footnote}} 
\footnotetext{\emph{MSC2010 subject classifications.} Primary 35D40, 35F21; secondary 65C05, 49L25.}     
\renewcommand{\thefootnote}{\arabic{footnote}}
\renewcommand{\thefootnote}{\fnsymbol{footnote}} 
\footnotetext{\emph{Key words and phrases.} Hamilton-Jacobi equations, rare-event simulation, importance sampling, variational problems.}     
\renewcommand{\thefootnote}{\arabic{footnote}}
   
\author[]{Boualem Djehiche \and Henrik Hult \and Pierre Nyquist}
\address[]{Department of Mathematics, KTH Royal Institute of Technology}
\maketitle   


\begin{abstract}
In this paper a duality relation between the $\Mane$ potential and Mather's action functional is derived in the context of convex and state-dependent Hamiltonians. The duality relation is used to obtain min-max representations of viscosity solutions of first order Hamilton-Jacobi equations.  These min-max representations naturally suggest class\-es of subsolutions of Hamilton-Jacobi equations that arise in the theory of large deviations. The subsolutions, in turn, are good candidates for designing efficient rare-event simulation algorithms.  
\end{abstract}

\section{Introduction} 

The motivation for this paper comes from the challenging problem to efficiently compute probabilities of rare events by stochastic simulation. Examples of such events include the probability that a diffusion process leaves a stable domain, voltage collapse in power systems, the probability of a large loss in a financial portfolio, the probability of buffer overflow in a queueing system, etc. See, e.g., \cite{AsmussenGlynn, Glasserman, RubinoTuffin} and references therein for numerous examples.   

For rare events the standard Monte Carlo technique fails because few particles will hit the rare event, leading to a large relative error. To reduce the variance a control mechanism that forces particles towards the rare event must be introduced. To obtain an unbiased estimator a weight is attached to each particle and the estimator is the sum of the weights of all the particles that end up in the rare event. The design of the controlled simulation algorithm must not only force particles towards the rare event,  but also keep the associated weights under control. Examples of such techniques include importance sampling and multi-level splitting, see \cite{AsmussenGlynn, RubinoTuffin},   as well as genealogical particle methods, see \cite{DelMoralGarnier}. 

Traditionally the design of rare event simulation algorithms are based on mimicking the large deviation behavior. More precisely, whenever a large deviation result is available that gives the exponential decay rate of probabilities of rare events and the most likely path to the rare event, the idea is to construct the control mechanism so that the system tends to follow the most likely path to the rare event. Although this approach has turned out to be reasonably successful, there are examples where this simple heuristic fail, see \cite{GlassermanWang}, and the design issue is delicate. More recently,  it has been demonstrated that, in many models in applied probability, the construction of efficient rare-event simulation algorithms is intimately connected with solutions to partial differential equations of Hamilton-Jacobi type that arise in large deviation theory. 

Suppose that the rate function associated with the large deviations of a sequence of stochastic processes $\{X^n(t); t\in [0 , T]\}$ is of the form
\begin{align*}
	\int_t^T \bar L(\psi(s), \dot \psi(s)) ds, 
\end{align*}
where $\psi$ is an absolutely continuous function and $\bar L$ is the local rate function, such that $v \mapsto \bar L(x,v)$ is convex for all $x \in \R^n$. Then, the large deviations rate of the probability $P_{t,x} (X^n(T) \notin \Omega )$, $0 \leq t < T$, $x \in \Omega$, where $\Omega$ is an open subset of $\R^n$, is given by
 \begin{align*}
	\bar U(t,x) = \inf_{\psi}\Big\{\int_t^T \bar L(\psi(s), \dot \psi(s)) ds, \psi(t) = x, \psi(T) \notin \Omega \Big\} 
\end{align*}
where the infimum is taken over all absolutely continuous functions. Since $\bar U$ is the value function of a variational problem it
satisfies a Hamilton-Jacobi terminal value problem  of the form
\begin{align}\label{eq:introUbar}
\begin{cases}	
	\bar U_t(t,x) - \bar H(x, -D\bar U(t,x)) = 0, & (t,x) \in [0,T) \times \Omega,\\
	\bar U(T,x) = 0, & x \in \partial \Omega,
\end{cases}
\end{align}
where $\bar H$ is the Fenchel-Legendre transform of $\bar L$, see e.g.\ \cite{FengKurtz}.
 	
In the context of  importance sampling the connection between efficient simulation algorithms and certain subsolutions of the Hamilton-Jacobi equation is established in \cite{DupuisWang, DW7, DSW, DLW3, VandenWeare}. See also \cite{DupuisDean, DupuisDean2} for multi-level splitting and \cite{DupuisCai} for genealogical particle methods. The essence of the developed theory is, roughly speaking, that the design of efficient stochastic simulation algorithms for computing probabilities of rare events is equivalent to finding subsolutions of the associated Hamilton-Jacobi equation whose value at the initial point agree with the value of the viscosity solution.  In this paper we develop a systematic approach to the construction of viscosity subsolutions, useful in rare-event simulation,  that are based on a novel min-max representation of viscosity solutions to the associated Hamilton-Jacobi equation. 

We consider Hamiltonians $(x,p) \mapsto H(x,p)$ that are convex in $p$ and satisfy standard continuity conditions. The main result, Theorem \ref{thm:duality}, proves a duality between \Mane's potential and Mather's action functional and is briefly described in what follows. 

With $L$ denoting the Fenchel-Legendre transform of $H$, the \Mane\ potential at level $c$ is given by
\begin{align*}
S^{c}(x, y) = \inf_{\psi, t}\Big\{ \int_{0}^{t} c + L(\psi(s), \dot \psi(s)) ds, \psi(0) = x, \psi(t) = y \Big\}, \quad x,y \in \R^n, 
\end{align*}
where the infimum is taken over all absolutely continuous functions $\psi: [0,\infty) \to \R^n$ and $t > 0$, see \cite{Mane}. Whenever it is finite, $y \mapsto S^c(x,y)$ is a viscosity subsolution of the stationary Hamilton-Jacobi equation
\begin{align*}
	H(y, DS(y)) = c, \quad y \in \R^{n},
\end{align*}
where $D$ denotes the gradient. Mather's action functional is given by\begin{align*}
	M(t,y; x) = \inf_{\psi}\Bigl\{ \int_{0}^{t} L(\psi(s), \dot \psi(s)) ds, \psi(0) = x, \psi(t) = y \Bigr\}, \quad t > 0 \; x,y \in \R^n,
\end{align*}
where the infimum is taken over all absolutely continuous functions $\psi: [0,t] \to \R^n$, see \cite{Mather}. From the variational representations it is elementary to show that
\begin{align*}
	S^{c}(x,y) = \inf_{t > 0} \{M(t,y;x) + ct\}. 
\end{align*}
The main result of this paper, Theorem \ref{thm:duality}, shows that the dual relation also holds:
\begin{align*}
	M(t,y; x) = \sup_{c > c_{H}}\{S^{c}(x,y) - ct\},
\end{align*}
where $c_{H}$ denotes the \Mane\ critical value; the infimum over $c \in \R$ for which the stationary Hamilton-Jacobi equation admits a global viscosity subsolution.  

From the duality result we derive min-max representations of viscosity solutions of various time-dependent problems. For the initial value problem
\begin{align*}
\begin{cases}
	V_{t}(t,y) + H(y, DV(t,y))  = 0, & (t,y) \in (0,\infty) \times \R^{n}, \\
	V(0,y) = g(y), & y \in \R^{n},  
\end{cases}
\end{align*}
we prove a min-max representation of the form
\begin{align*}
	V(t,y) = \inf_{x} \sup_{c > c_{H}} \{g(x) + S^{c}(x,y) - ct\}, \quad (t,y) \in [0,\infty) \times \R^{n}. 
\end{align*}
The min-max representation may be viewed as a generalization, to state-dependent Hamiltonians,  of the classical Hopf-Lax-Oleinik formula, which states that if $H(x,p) = H(p)$, then the solution to the initial value problem is given by
\begin{align*}
	V(t,y) = \inf_x\Big\{g(x) + tL\Big(\frac{y-x}{t}\Big)\Big\}.
\end{align*}
See \cite{Capuzzo, BardiEvans} for further details and generalizations of Hopf-Lax representation formulas to some state-dependent Hamiltonians. 

Similar min-max representations are stated for terminal value problems, problems on domains, and exit problems. For instance, the viscosity solution $\bar U$ to \eqref{eq:introUbar} can be represented as
\begin{align*}
	\bar U(t,x) = \inf_{y \in \partial \Omega} \sup_{c > c_{\bar H}} \{\bar S^c(x,y) - c(T-t)\},
\end{align*}
where $\bar S^c$ is the \Mane\ potential associated with $\bar L$. 

The min-max representations naturally suggest families of viscosity subsolutions useful for the design of rare-event simulation algorithms for time-dependent problems with state-dependent Hamiltonians. Indeed, for any $c > c_{\bar H}$, $y \in \partial \Omega$ and $K \geq 0$ sufficiently large, the function $(t,x) \mapsto \bar S^c(x,y) - c(T-t)-K$ is the type of subsolution to \eqref{eq:introUbar} that can be used to design efficient algorithms. We illustrate the applications in rare-event simulation in detail for exit problems of small-noise diffusions and birth-and-death processes. 

The paper is organized as follows. 
Background material on viscosity solutions of first order Hamilton-Jacobi equations is given in Section \ref{sec:visc}. The duality result is given in Section \ref{sec:Min-max} from which a min-max representation for the initial value problem is obtained. Similar representations for terminal value problems, problems on domains and exit problems are also presented. In Section \ref{sec:HopfLax} a direct relation between the min-max representation and the Hopf-Lax-Oleinik formula is presented for state-independent convex Hamiltonians. In Section \ref{sec:rare} it is shown how the min-max representation naturally suggests families of subsolutions appropriate for the design of efficient rare event simulation algorithms. Examples related to small-noise diffusions and birth-and-death processes are also provided.  

\section[Continuous viscosity solutions of Hamilton-Jacobi equations]{Continuous viscosity solutions of Hamilton-Jacobi equations}\label{sec:visc}
In this section a brief introduction to viscosity solutions of Hamilton-Jacobi equations is given. For more details the reader is referred to \cite{ CapDol,  Barles, Evans, Fathi, CrandallIshiiLionsU, CrandallEvansLions}. 

Suppose that the Hamiltonian $H: \R^{n} \times \R^{n} \to \R$ is convex in the second coordinate and  satisfies the following continuity condition:
 \begin{align}\label{eq:LipH}
 \left. \begin{array}{l}
  	\text{$H$ is uniformly continuous on $\R^n \times B_0(R)$ for each $R > 0$ and}  \\ 
	|H(x,p) - H(y,p)| \leq \omega(|x-y|(1+|p|)), \text{ for $x,y,p \in \R^{n}$}, \end{array} \right\}
\end{align}
where  $B_0(R) = \{p \in \R^n: |p| < R\}$ and $\omega: [0,\infty) \to [0,\infty)$ is a continuous nondecreasing function with $\omega(0) = 0$.

Given an initial function $g: \R^{n} \to \R$, the initial value problem for the Hamilton-Jacobi equation is to find $V: [0,\infty) \times \R^{n} \to \R$ satisfying
\begin{align}\label{eq:HJ}
	\begin{cases} V_{t}(t,x) + H(x, DV(t,x)) = 0, & (t,x) \in (0,\infty)  \times \R^{n}, \\
	V(0,x) = g(x), & x \in \R^{n}. 
	\end{cases}
\end{align}
where $V_t = \partial V/\partial t$ and $DV = (\partial V/\partial x_1, \dots, \partial V/\partial x_n)$.

In general it is impossible to find classical solutions to Hamilton-Jacobi equations.  Crandall and Lions have introduced the notion of viscosity solutions, see \cite{CrandallLions, CrandallEvansLions}.  
A continuous function $V : [0,\infty) \times \R^{n} \to \R$ is a \emph{viscosity subsolution (supersolution)} of \eqref{eq:HJ} if $V(0,x) \leq g(x)$ ($\geq g(x)$) and,  for every $v \in C^{\infty}((0, \infty) \times \R^{n})$,  
\begin{align*}
	 \left. \begin{array}{l}
	 \mbox{if $V -v$ has a local maximum (minimum) at $(t_{0}, x_{0}) \in (0,\infty) \times \R^{n}$,} \\ \text{then } v_{t}(t_{0}, x_{0}) + H(x_0, Dv(t_{0}, x_{0})) \leq 0 \quad (\geq 0).
	 \end{array}\right\}
\end{align*}
$V$ is a \emph{viscosity solution} if it is both a subsolution and a supersolution of \eqref{eq:HJ}. If the initial function $g$ is uniformly continuous  and $H$ satisfies \eqref{eq:LipH} then the comparison principle holds and the solution of the initial value problem \eqref{eq:HJ} is unique, see e.g.\  Theorem 3.7 and Remark 3.8 in Chapter II of \cite{CapDol}. 

Denote by $L$ the Fenchel-Legendre transform of $H$,  that is, 
\begin{align*}
	 L(x,v) &= \sup_{p} \{\langle p, v\rangle -  H(x,p)\}, \text{ and } \\ 
	 	 H(x, p) &= \sup_{v} \{\langle p, v\rangle -  L(x,v)\}.  
\end{align*}
Throughout the paper it will be assumed that 
\begin{align*}
\text{$L$ is continuous at $(x,0)$ for each $x \in \R^n$.} 
\end{align*}

Given a uniformly continuous function $g: \R^{n} \to \R$, let $V$ be the value function of the variational problem
\begin{align}\label{eq:V}
	V(t,y) = \inf_\psi \biggl \{ g(\psi(0)) + \int_{0}^{t} L(\psi(s), \dot \psi(s)) ds, 
	\psi(t) = y \biggr \}, 
\end{align}
where $(t,y) \in [0,\infty) \times \R^{n}$ and the infimum is taken over all absolutely continuous functions $\psi: [0,\infty) \to \R^{n}$.  It is well known that $V$ is the unique continuous viscosity solution to \eqref{eq:HJ}, see e.g.\  \cite[Ch.\ III, Sec.\ 3]{CapDol}. 


\subsection{The stationary Hamilton-Jacobi equation}

Given $c \in \R$, the stationary Hamilton-Jacobi equation is
\begin{align}\label{eq:sHJ}
	H(x, DS(x)) = c, \quad x \in \R^{n}. 
\end{align}
Similar to the time-dependent case, a continuous function $S: \R^n \to \R$ is a viscosity subsolution (supersolution) of the stationary Hamilton-Jacobi equation \eqref{eq:sHJ} if, for every function $v \in C^{\infty}(\R^{n})$,  
\begin{align}
	 \left.\begin{array}{l}
	 \mbox{if $S -v$ has a local maximum (minimum) at $x_{0} \in \R^{n}$, } \\ \text{then }  H(x_0, Dv(x_{0})) \leq c \quad (\geq c).
	 \end{array}\right\}  \label{eq:viscstat}
\end{align}
It is a viscosity solution if it is both a viscosity subsolution and a viscosity supersolution.  

The \emph{Ma\~n\'e critical value}, $c_{H}$,  is the infimum over $c$ for which \eqref{eq:sHJ} admits a viscosity subsolution. It may be observed that 
\begin{align}\label{eq:cH}
c_{H} \geq \sup_{x} \inf_{p} H(x,p). 
\end{align}
Indeed, if \eqref{eq:sHJ} admits a viscosity subsolution $U^c$ at level $c$, then for almost every $x$ there is a $v \in C^{\infty} (\R ^n)$ such that $U^c - v$ has a local maximum at $x$ and $\inf _p H(x,p) \leq H(x,Dv(x)) \leq c$. 
The claim follows by taking supremum over $x$.  Examples where $c_{H} = \sup_{x} \inf_{p} H(x,p)$ are provided below. 

For $c \in \R$, the \emph{Ma\~n\'e potential at level $c$}, originally introduced by \Mane\  in \cite{Mane}, is the function $S^{c}: \R^{n} \times \R^{n} \to \R$ defined by
\begin{align}\label{eq:Manepotential}
	S^{c}(x, y) = \inf_{\psi, t}\Big\{ \int_{0}^{t} c + L(\psi(s), \dot \psi(s)) ds, \psi(0) = x, \psi(t) = y \Big\}, \quad x,y \in \R^n, 
\end{align}
where the infimum is taken over all $t > 0$ and absolutely continuous $\psi: [0,\infty) \to \R^n$. It is useful to observe the following properties of $S^c$: for each $x \in \R^{n}$, $S^c(x,x) = 0$, for each $x, y \in \R^{n}$,  the function $c \mapsto S^{c}(x , y)$ is nondecreasing and  $S^{c}$ satisfies the triangle inequality:
\begin{align}\label{eq:tri}
	S^{c}(x, z) \leq S^{c}(x, y) + S^{c}(y,z), \quad x,y,z \in \R^n. 
\end{align}
The property $S^c(x,x) = 0$ follows from the triangle inequality. 
To prove the triangle inequality, take an arbitrary $\epsilon > 0$, and select $t_1, t_2 > 0$ and absolutely continuous functions $\psi_1, \psi_2$ with $\psi_1(0) = x$, $\psi_1(t_1) = y$, $\psi_2(0) = y$ and $\psi_2(t_2) = z$ such that
\begin{align*}
	S^c(x,y) &\geq \int_0^{t_1} c + L(\psi_1(s), \dot \psi_1(s)) ds - \frac{\epsilon}{2}, \\
	S^c(y,z) &\geq \int_0^{t_2} c + L(\psi_2(s), \dot \psi_2(s)) ds - \frac{\epsilon}{2}. 
\end{align*}
Concatenate the two trajectories by
\begin{align*}
	\psi(s) = \psi_1(s)I\{0 \leq s \leq t_1\} + \psi_2(s-t_1)I\{t_1 < s \leq t_1+t_2\}. 
\end{align*}
It follows that
\begin{align*}
	S^c(x,y) + S^c(y,z) &\geq \int_0^{t_1} c + L(\psi_1(s), \dot \psi_1(s)) ds \\
	& \quad  +  \int_0^{t_2} c + L(\psi_2(s), \dot \psi_2(s)) ds  - \epsilon  \\
	&= \int_0^{t_1 + t_2} c + L(\psi(s), \dot \psi(s)) ds  - \epsilon  \\
	& \geq S^c(x,z) - \epsilon. 
\end{align*}
Since $\epsilon >0$ is arbitrary the triangle inequality follows. 

It is possible that $S^{c}$ is identically  $-\infty$ for small $c$. Indeed, if $L(x,v) = \frac{1}{2}|v|^2$ and $c < 0$, then it follows from the variational representation \eqref{eq:Manepotential} that $S^c(x,y) = -\infty$ for all $x,y \in \R^n$.

The \Mane\ potential is well studied within weak KAM theory where it is commonly assumed that the Hamiltonian is uniformly superlinear; for each $K  \geq 0$ there exists $C^*(K) \in \R$ such that $H(x,p) \geq K|p| - C^*(K)$ for each $x,p$. Under such an assumption there exist critical viscosity subsolutions, that is, there exists a global viscosity subsolution to \eqref{eq:sHJ} for $c = c_H$, see \cite{FathiMaderna, Fathi}. In this paper it is only assumed that the Hamiltonian is convex in $p$, see \eqref{eq:LipH}. For instance, the Hamiltonian associated with the unit rate Poisson process, which is of the form
\begin{align*}
	H(p) = e^p-1, \quad p \in \R,
\end{align*}
is covered by our assumptions. For this choice of $H$ the \Mane\ critical value is $c_H = -1$, but there can be no critical subsolution $S$ as it would have to satisfy $DS(x) = -\infty$ almost eveywhere. 


The following properties of the \Mane\ potential are well known and similar statements appear in \cite{Fathi, FathiMaderna, FathiSiconolfi}, see also the lecture notes \cite{FathiLN, BernardLN}. Because our assumptions on the Hamiltonian are slightly different a proof is included for completeness. 

\begin{prop}\label{prop:Fathi} Let $c \in \R$. 
	\begin{enumerate}[(i)]
	\item Suppose that $S^c > -\infty$. For each $x \in \R^{n}$ the function $y \mapsto S^c(x,y)$ is a viscosity subsolution to $H(y, DS(y)) = c$ on $\R^n$ and a viscosity solution on $\R^n \setminus \{x\}$. 
	\item $S^c(x,y) = \sup_{S \in \mathcal{S}^c_x} S(y)$, for each $x,y \in \R^n$,  where $\mathcal{S}^c_x$ is the collection of all continuous viscosity subsolutions to $H(y, DS(y)) = c$ that vanish at $x$. 
	\end{enumerate}
\end{prop}

We conclude that for $c > c_H$ there exist viscosity subsolutions to \eqref{eq:sHJ} and by Proposition \ref{prop:Fathi}(ii) it follows that $S^c > - \infty$ . Similarly, for $c < c_H$ there are no subsolutions and by Proposition \ref{prop:Fathi}(i) $S^c = -\infty$. 

Before proceeding to the proof of Proposition \ref{prop:Fathi} we state an important lemma that can be interpreted as a dynamic programming property of the \Mane\ potential. 
\begin{lem}\label{lem:dp}
	Suppse that $S^c > -\infty$. For any $x,y_0 \in \R^n$ with $y_0 \neq x$ and $\epsilon > 0$ there exist $0 < \delta < |x-y_0|$, $y$ with $|y-y_0| < \delta$, $h > 0$ and  an absolutely continuous  function $\psi$ with $\psi(0) = y$, $\psi(h) = y_0$, and $|\psi(s) - y_0| < \delta$ for all $s \in [0,h]$, such that
\begin{align*}
	S^c(x,y_0) \geq S^c(x,y) + \int_0^h c + L(\psi(s), \dot \psi(s)) ds - \epsilon. 
\end{align*}
\end{lem}

\begin{proof}
Given $x,y_0 \in \R^n$ with $x \neq y_0$ and  $\epsilon > 0$,  take $t > 0$ and an absolutely continuous function $\varphi$ with $\varphi(0) = x$, $\varphi(t) = y_0$ such that
\begin{align*}
	S^c(x,y_0) \geq \int_0^t c+L(\varphi(s), \dot \varphi(s)) ds - \epsilon. 
\end{align*}
Let $0 < \delta < |x-y_0|$ and take  $h > 0$ such that $|\varphi(s) - y_0| < \delta$ for each $s \in [t-h,t]$. With $y = \varphi(t-h)$ and $\psi(s) = \varphi(s+t-h)$, $s \in [0,h]$,  it follows that
\begin{align*}
	S^c(x,y_0) &\geq \int_0^t c+L(\varphi(s), \dot \varphi(s)) ds - \epsilon \\
	&= \int_0^{t-h} c+L(\varphi(s), \dot \varphi(s)) ds  + \int_{t-h}^t c + L(\varphi(s), \dot \varphi(s)) ds - \epsilon \\
	&\geq S^c(x,y) + \int_0^h c+L(\psi(s), \dot \psi(s)) ds - \epsilon.
\end{align*}
This completes the proof.
\end{proof}

\begin{proof}[Proof of Proposition \ref{prop:Fathi}]
Proof of (i).  Suppose that $S^c > -\infty$ and take $x \in \R^n$.  First we prove the viscosity subsolution property. 
For $v \in C^\infty(\R^n)$, suppose that $S^c(x,\cdot)-v$ has a local maximum at $y_0$ and, contrary to what we want to show, that $H(y,Dv(y)) -c \geq \theta > 0$ for $|y-y_0| \leq \delta$, for some $\delta > 0$. We may assume that $\delta$ is sufficiently small that 
\begin{align*}
	S^c(x,y) - v(y) \leq S^c(x, y_0) - v(y_0), \quad \text{for } |y-y_0| \leq \delta.
\end{align*}
Take any $y$ with $|y-y_0| \leq \delta$ and consider any absolutely continuous function $\psi$ such that $\psi(0) = y$, $\psi(h) = y_0$ and $|\psi(s) - y_0| \leq \delta$ for all $s \in [0,h]$. By the triangle inequality \eqref{eq:tri} and the last inequality
\begin{align*}
	0 &\geq S^c(x, y_0) - S^c (x,y) - \int _{0} ^ {h} c+L(\psi(s), \dot \psi(s))ds \\
	& \geq v(y_0) - v(y) - \int _{0} ^ {h} c+ L(\psi(s), \dot \psi(s))ds \\
	&= \int _{0} ^{h} \frac{d}{ds} v (\psi (s)) - L(\psi(s), \dot \psi(s)) -c \;ds \\
	& = \int _{0} ^{h} \langle Dv(\psi(s)), \dot \psi (s) \rangle - L(\psi(s), \dot \psi(s))  -c\; ds.
\end{align*}
We may assume that $\dot \psi$ is chosen such that, using the conjugacy between $H$ and $L$,
\begin{align*}
	H(\psi(s) , Dv(\psi(s))) \leq \langle Dv(\psi(s)), \dot \psi(s) \rangle - L(\psi(s), \dot \psi(s)) + \frac{\theta}{2},
\end{align*}
for all $s\in [0,h]$. Then
\begin{align*}
	\frac{\theta h}{2} \geq \int _{0} ^{h} H(\psi(s), Dv(\psi(s))) -c \;ds \geq \theta h,
\end{align*}
which is a contradiction. Thus, it must indeed hold that $H(y_0, Dv(y_0)) \leq c$.

Next,  we prove the supersolution property on $\R^n \setminus \{x\}$. Take $v \in C^\infty(\R^n)$ and suppose $S^c(x,\cdot)-v$ has a local minimum at $y_0 \neq x$ and, contrary to what we want to show, that $H(y,Dv(y)) - c \leq -\theta < 0$ for $|y-y_0| \leq \delta$, for some $\delta > 0$. We may assume that $\delta$ is sufficiently small that $|x-y_0| > \delta$ and
\begin{align*}
	S^c(x,y) - v(y) \geq S^c(x,y_0) - v(y_0), \quad \text{for } |y-y_0| \leq \delta.
\end{align*}

By Lemma \ref{lem:dp} we may select $y$ with $|y-y_0| \leq \delta$ and an absolutely continuous $\psi$ such that $\psi(0) = y$, $\psi(h) = y_0$ and $|\psi(s) - y_0| \leq \delta$ for all $s \in [0,h]$, with the property that
\begin{align*}
	S^c(x, y_0) \geq S^c(x,y) + \int_0^h c+L(\psi(s), \dot \psi(s)) ds - \frac{\theta h}{2}.
\end{align*}
The last inequality implies that
\begin{align*}
	\frac{\theta h}{2} &\geq S^c(x,y) - S^c (x, y_0) + \int _{0} ^ {h} c+ L(\psi(s), \dot \psi(s))ds \\
	& \geq v(y) - v(y_0) + \int _{0} ^ {h} c+ L(\psi(s), \dot \psi(s))ds \\
	&= \int _{0} ^{h} - \frac{d}{ds} v (\psi (s)) + L(\psi(s), \dot \psi(s)) + c\;ds\\
	& = \int _{0} ^{h} - \langle Dv(\psi(s)), \dot \psi (s) \rangle + L(\psi(s), \dot \psi(s)) +c\; ds \\
	&\geq \int_0^h - \Big( H(\psi(s), Dv(\psi(s))) - c \Big) ds. 
\end{align*}
We conclude that
\begin{align*}
	- \frac{\theta h}{2} \leq \int _{0} ^{h} H(\psi(s), Dv(\psi(s)))  - c \;ds \leq  -\theta h,
\end{align*}
which is a contradiction. Thus, it must indeed hold that $H(y_0, Dv(y_0)) \geq c$. This completes the proof of (i).

Proof of (ii). Let $c \in \R$. If there are no viscosity subsolutions at level $c$, then by (i) $S^c = -\infty$ and $\mathcal{S}^c_x = \emptyset$, which implies that $\sup_{S \in \mathcal{S}^c_x} S(y) = -\infty$ as well. 
If there exist continuous viscosity subsolutions at level $c$, take $x \in \R^n$ and let $S$ be a continuous viscosity subsolution of $H(y, DS(y)) = c$ on $\R^n$. It is sufficient to show that for any $y \in \R^n$, $t > 0$ and absolutely continuous function $\psi$ with $\psi(0) = x$ and $\psi(t) = y$, 
\begin{align} \label{eq:leqint}
	S(y) - S(x) \leq \int_0^t c + L(\psi(s), \dot \psi(s)) ds. 
\end{align}

To show \eqref{eq:leqint}, fix $t>0$, $y\in \R ^n$, an absolutely continuous $\psi$ with $\psi(0)=x$ and $\psi(t) = y$ and take an arbitrary $\epsilon > 0$. For every $s \in [0,t]$, let $v_s \in C^\infty(\R^n)$ be such that $S-v_s$ has a local maximum at $\psi(s)$. Then, there exists $\delta_s > 0$ such that 
\begin{align*}
	S(z) - v_s(z) \leq S(\psi(s)) - v_s(\psi(s)), \quad \text{ for } |z-\psi(s)| < \delta_s, 
\end{align*}	
and consequently that 
\begin{align}
	S(z) - S(\psi(s)) \leq v_s(z) - v_s(\psi(s)), \quad \text{ for } |z-\psi(s)| < \delta_s. \label{eq:t1}
\end{align}
By continuity of $H$ and $Dv_s$ we may, in addition,  assume that $\delta_s$ is sufficiently small that
\begin{align*}
	H(z, Dv_s(z)) \leq c + \frac{\epsilon}{t}, \quad \text{ for } |z-\psi(s)| < \delta_s.
\end{align*}
For every $s \in [0,t]$, let $h_s > 0$ be such that $|\psi(u) - \psi(s)| < \delta_s$ for every $u$ with $|u-s| < h_s$. This is possible due to the continuity of $\psi$. The union
\begin{align*}
	[0, h_0) \cup \bigcup_{s \in (0,t]} (s, s+h_s), 
\end{align*}
is an open cover of  $[0,t]$. Since $[0,t]$ is compact there is a finite subcover, which we may assume is of the form
\begin{align*}
	[0, h_0) \cup \bigcup_{k=1}^{n-1} (s_k, s_k + h_{s_k}), 
\end{align*}
where $0 = s_0 < s_1 < \dots < s_{n-1} < s_{n} = t$.  Since the finite union is a 
subcover, it must hold that $s_{k-1} < s_k < s_{k-1} + h_{s_{k-1}}$ for each $k = 1,\dots, n$. It follows that,  using \eqref{eq:t1} and the conjugacy between $H$ and $L$,
\begin{align*}
 S(y) - S(x) &= \sum_{k=1}^n S(\psi(s_k)) - S(\psi(s_{k-1})) \\
 & \leq \sum_{k=1}^n v_{s_{k-1}}(\psi(s_k)) - v_{s_{k-1}}(\psi(s_{k-1})) \\
 &= \sum_{k=1}^n \int_{s_{k-1}}^{s_k} \langle Dv_{s_{k-1}}(\psi(s)), \dot \psi(s) \rangle ds \\ 
 &\leq \sum_{k=1}^n \int_{s_{k-1}}^{s_k}  H(\psi(s), Dv_{s_{k-1}}(\psi(s))) + L(\psi(s), \dot \psi(s))\, ds \\ 
 &\leq \sum_{k=1}^n \int_{s_{k-1}}^{s_k}  c+\frac{\epsilon}{t} +  L(\psi(s), \dot \psi(s))\, ds \\ 
&= \epsilon  + \int_0^t c + L(\psi(s), \dot \psi(s))\, ds. 
\end{align*}
Since $\epsilon > 0$ was arbitrary the claim follows. 
\end{proof}

We proceed by computing \Mane's critical value, $c_H$, for some Hamiltonians arising in the theory of large deviations of stochastic processes. 

\begin{exmp}[Critical diffusion process]\label{ex:diffusion}
Let $U:\R^{n} \to \R$ be a potential function and $b(x) = -DU(x)$. Consider the Hamiltonian $H(x,p) = \langle b(x), p\rangle + \frac{1}{2}|p|^2$. Then $c_{H} = \sup_{x} \inf_{p} H(x,p) = -\frac{1}{2}\inf_{x}|b(x)|^{2}$. Indeed, from \eqref{eq:cH}, $c_{H} \geq  -\frac{1}{2}\inf_{x}|b(x)|^{2}$ and $U$ is a subsolution to $H(x,DS(x)) = -\frac{1}{2}\inf_{x}|b(x)|^{2}$, which implies $c_{H} \leq -\frac{1}{2}\inf_{x}|b(x)|^{2}$. In particular, if $DU(x) = 0$ for some $x$, then $c_{H} = 0$.  The \Mane\ potential can be viewed as a generalization of Freidlin-Wentzell's quasi-potential described in \cite[Ch.\ 4]{Freidlin1984}. 
\end{exmp}

\begin{exmp}[Birth-and-death process]\label{ex:birthdeath}
Consider an interval $(a,b) \subset \R$ and functions $\mu: (a,b) \to [0,\infty)$ and $\lambda: (a,b) \to [0,\infty)$ satisfying $\int_a^b \log(\sqrt{\mu(x)/\lambda(x)}) dx < \infty$. Consider the Hamiltonian
\begin{align*}
	H(x,p) = \lambda(x)(e^p -1)+ \mu(x)(e^{-p}-1). 
\end{align*}
In this case $c_H = \sup_x \inf_p H(x,p) = -\inf_x (\sqrt{\mu(x)}-\sqrt{\lambda(x)})^2$. To see this, recall from \eqref{eq:cH} that $c_H \geq  -\inf_x (\sqrt{\mu(x)}-\sqrt{\lambda(x)})^2$. A subsolution of 
$$H(x,DS(x)) = -\inf_x (\sqrt{\mu(x)}-\sqrt{\lambda(x)})^2,$$
is given by
\begin{align*}
	U(x) = \int_a^x \log(\sqrt{\mu(z)/\lambda(z)}) dz. 
\end{align*}
Indeed, 
\begin{align*}
	H(x,DU(x)) = -(\sqrt{\mu(x)}-\sqrt{\lambda(x)})^2 \leq -\inf_x (\sqrt{\mu(x)}-\sqrt{\lambda(x)})^2. 
\end{align*}
\end{exmp}

\begin{exmp}[Pure birth process]\label{ex:pureBirth}
Let $\lambda: [0,\infty)^{n} \to [0,\infty)^{n}$ and put  
\begin{align*}
	H(x,p) = \sum_{j=1}^{n} \lambda_{j}(x)(e^{p_{j}} -1). 
\end{align*}
In this case $c_{H} = \sup_{x} \inf_{p} H(x,p) = - \inf_{x} \sum_{j=1}^{n} \lambda_{j}(x) =: -\lambda_{*}$. 
Indeed, from \eqref{eq:cH} it follows that $c_{H} \geq  -\lambda_{*}$ and for any $c \in (-\lambda_{*} , 0)$ and $\alpha \leq \log(1+c/\lambda_{*})$, the function $\alpha \langle 1, x\rangle$ is a subsolution to $H(x,DS(x)) = c$, which implies $c_{H} \leq -\lambda_{*}$.
\end{exmp}

\section{Duality and min-max representations} \label{sec:Min-max}
In this section a duality result is presented from which min-max representations of viscosity solutions are obtained. Min-max representations are formulated for  initial value problems,  terminal value problems, problems on domains,  as well as exit problems. 

\subsection{Duality}

Let us consider a Hamiltonian $H$ satisfying \eqref{eq:LipH}. As in the previous section the \Mane\ potential is denoted by $S^{c}$. By Proposition \ref{prop:Fathi}(i), $y \mapsto S^{c}(x,y)$ is a viscosity subsolution to  $H(y, DS(y)) = c$ for each $x \in \R^{n}$ and $c > c_{H}$. It follows immediately that the function
$(t,y) \mapsto S^{c}(x,y) - ct$ is a viscosity subsolution of the evolutionary Hamilton-Jacobi equation 
\begin{align}\label{eq:eHJ}
V_{t}(t,y) + H(y, DV(t,y)) = 0, \quad (t,y) \in (0,\infty) \times \R^n. 
\end{align}
For any $x \in \R^n$, Perron's method, see \cite[Theorem V.2.14]{CapDol}, implies that the function $U(\cdot\, ; x)$ given by  
\begin{align*}
	U(t,y; x) = \sup_{c > c_{H}}\{S^{c}(x,y) - ct\}, \quad (t,y) \in [0,\infty) \times \R^n
\end{align*}
is also a viscosity subsolution to \eqref{eq:eHJ} for any $x \in \R^n$. Moreover,  
$y \mapsto S^{c}(x,y)$ is a viscosity solution to $H(y, DS(y)) = c$ on $\R^{n} \setminus \{x\}$. This property also transfers to $U(t,y; x)$ as the following proposition shows. 

\begin{prop}\label{prop:auxprop}
For each $x \in \R^{n}$,  $U(t,y; x) = \sup_{c > c_{H}} \{S^{c}(x,y)-ct\}$ is a viscosity solution to \eqref{eq:eHJ} on $(0,\infty) \times \R^n\setminus\{x\}$ .  
\end{prop}

\begin{proof} Since $y\mapsto S^{c}(x,y)$ is a viscosity subsolution to $H(y,DS(y)) = c$, for any $c > c_{H}$, it follows by Perron's method that $U(t,y;x)$ is a viscosity subsolution to \eqref{eq:eHJ}. It remains to show the supersolution property.

Fix $x \in \R^{n}$ and  take $v \in C^{\infty}((0,\infty)\times \R^{n}\setminus \{x\})$. Suppose that $U(\cdot\, ;x) - v$ has a local minimum at $(t_{0}, y_{0})$ where $y_{0} \neq x$. We must show that $v_{t}(t_{0}, y_{0}) + H(y_{0}, Dv(t_{0}, y_{0})) \geq 0$. 

Suppose, on the contrary, that there exists a $\theta > 0$ such that 
\begin{align*}
	v_{t}(t, y) + H(y, Dv(t, y)) \leq - \theta, 
\end{align*}
for all $(t,y)$ with $|t-t_{0}| + |y-y_{0}| < \delta$ for some $\delta > 0$. We may assume that $\delta$ is sufficiently small that $|x-y_0| > \delta$ and 
\begin{align*}
	U(t,y;x)-v(t,y) \geq U(t_{0}, y_{0};x) - v(t_{0}, y_{0}), 
\end{align*}
for all $(t,y)$ with $|t-t_{0}| + |y-y_{0}| < \delta$. For all absolutely continuous $\psi$ with $\psi(0) = y$, $\psi(h) = y_{0}$, $0 < h < \delta - |y-y_0|$, such that $s + |\psi(s) - y_0| < \delta$ for all $s \in [0,h]$, the previous inequality, with $t = t_0 - h$, implies that  
\begin{align*}
	&U(t_0, y_0; x) - U(t_0-h,y; x) - \int_0^h L(\psi(s), \dot\psi(s))ds  \\ & \quad 
		\leq v(t_0, y_0) - v(t_0-h,y) - \int_0^h L(\psi(s), \dot\psi(s))ds  \\ &\quad = \int_0^h \frac{d}{ds} v(s,\psi(s))  - L(\psi(s), \dot\psi(s)) ds
	\\ &\quad 
	 =  \int_0^h v_t(s,\psi(s)) + \langle Dv(s,\psi(s)), \dot \psi(s)\rangle - L(\psi(s), \dot\psi(s))  ds
	\\ & \quad \leq \int_0^h  v_t(s,\psi(s)) + H(\psi(s), Dv(s,\psi(s))) ds \\ &\quad \leq -\theta h. 
\end{align*}
Take $c > c_H$ such that
\begin{align*}
 	U(t_0-h,y;x)  \leq S^c(x,y) - c(t_0-h) + \frac{\theta h}{2}. 
\end{align*}
Combining the last two displays shows that
\begin{align*}
	-\theta h &\geq U(t_0, y_0; x) - U(t_0-h,y; x) - \int_0^h L(\psi(s), \dot\psi(s))ds \\ & \geq U(t_0, y_0; x) - \Big(S^c(x,y) - c(t_0-h) + \frac{\theta h}{2}\Big) - \int_0^h L(\psi(s), \dot\psi(s))ds \\
	& \geq  S^c(x,y_0) - \Big(S^c(x,y) +  \int_0^h c+ L(\psi(s), \dot\psi(s))ds + \frac{\theta h}{2} \Big)
\end{align*}
and we conclude that
\begin{align*}
	S^c(x,y_0) \leq S^c(x,y) +  \int_0^h c+ L(\psi(s), \dot\psi(s))ds - \frac{\theta h}{2}. 
\end{align*}
This contradicts the statement of Lemma \ref{lem:dp} and completes the proof. 
\end{proof}

For any $x \in \R^n$ and $(t,y) \in (0,\infty) \times \R^n$, let
\begin{align*}
	M(t,y; x) = \inf_{\psi}\Big\{\int_0^t L(\psi(s), \dot \psi(s)) ds, \psi(0) = x, \psi(t) = y\Big\},
\end{align*}
where the infimum is taken over all absolutely continuous $\psi: [0,\infty) \to \R^n$. $M$ is Mather's action functional, see \cite{Mather},  viewed as a function of $(t,y)$. 

\begin{prop}\label{prop:W} Let $x \in \R^n$. 
\begin{enumerate}[(i)]
\item  $M(\cdot\,; x)$ is a viscosity subsolution to \eqref{eq:eHJ} on $(0,\infty) \times \R^n$ and a viscosity solution on $(0,\infty) \times \R^n\setminus \{x\}$. 
\item  $M(t,y; x) = \sup_{V \in \mathcal{S}_{0,x}} V(t,y)$, where $\mathcal{S} _{0,x}$ is the collection of all 
	continuous viscosity subsolutions to  \eqref{eq:eHJ} vanishing at $(0,x)$. 
	\end{enumerate}
\end{prop}

The proof is identical  to that of Proposition \ref{prop:Fathi} except for minor notational differences. The details are provided in the Appendix.

From the variational representation \eqref{eq:Manepotential} of the \Mane\ potential it follows immediately that
\begin{align*}
	S^c(x,y) = \inf_{t > 0} \{M(t,y;x) + ct\}. 
\end{align*}
The dual relationship also holds.
 
\begin{theorem}[Duality]\label{thm:duality}
	For each $x,y \in \R^n$, 
	\begin{align}
	S^c(x,y) &= \inf_{t > 0} \{M(t,y;x) + ct\}, \label{eq:duality1} \\
	M(t,y; x) &= \sup_{c > c_H}\{S^c(x,y) - ct\}. \label{eq:duality2} 
	\end{align}
\end{theorem}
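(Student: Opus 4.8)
The first identity \eqref{eq:duality1} is immediate from the definitions: performing the infimum over $t>0$ last in the variational formula for $S^{c}$ gives $S^{c}(x,y)=\inf_{t>0}\big\{ct+\inf_{\psi}\int_{0}^{t}L(\psi(s),\dot\psi(s))\,ds\big\}=\inf_{t>0}\{M(t,y;x)+ct\}$, the inner infimum being over absolutely continuous $\psi$ with $\psi(0)=x$ and $\psi(t)=y$. It also yields one half of \eqref{eq:duality2} for free: by \eqref{eq:duality1}, $S^{c}(x,y)\le M(t,y;x)+ct$ for every $c>c_{H}$ and $t>0$, hence $S^{c}(x,y)-ct\le M(t,y;x)$, and taking the supremum over $c>c_{H}$ gives $\sup_{c>c_{H}}\{S^{c}(x,y)-ct\}\le M(t,y;x)$.

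For the reverse inequality, write $U(t,y;x)=\sup_{c>c_{H}}\{S^{c}(x,y)-ct\}$. By Proposition \ref{prop:auxprop}, $U(\cdot\,;x)$ is a continuous viscosity subsolution of \eqref{eq:eHJ} on $(0,\infty)\times\R^{n}$ and a viscosity solution on $(0,\infty)\times(\R^{n}\setminus\{x\})$; since $S^{c}(x,x)=0$ for all $c>c_{H}$, it vanishes at $(0,x)$, so $U(\cdot\,;x)\in\mathcal S_{0,x}$. By Proposition \ref{prop:W}, $M(\cdot\,;x)$ has the same properties and $M(\cdot\,;x)=\sup_{V\in\mathcal S_{0,x}}V$. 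Since we already have $U\le M$, proving \eqref{eq:duality2} amounts to proving $M\le U$, and by Proposition \ref{prop:W}(ii) this reduces to showing $V\le U(\cdot\,;x)$ for every $V\in\mathcal S_{0,x}$. The plan is to obtain this from the comparison principle for \eqref{eq:eHJ} (valid under \eqref{eq:LipH}), applied to the subsolution $V$ and the supersolution $U(\cdot\,;x)$ on the punctured cylinder $(0,\infty)\times(\R^{n}\setminus\{x\})$, whose parabolic boundary is $\{t=0\}\cup\big((0,\infty)\times\{x\}\big)$. On $\{t=0\}$ one checks that $U$ and $M$ agree in the extended sense: both equal $0$ at $(0,x)$, while for $y\ne x$ one has $U(0,y;x)=+\infty$ (since $S^{c}(x,y)\to+\infty$ as $c\to\infty$, by \eqref{eq:duality1} and the growth of $M(\tau,y;x)$ as $\tau\to0$), so $V\le U$ is automatic there; the passage to the corner $(0,x)$ is handled using continuity and $V(0,x)=U(0,x;x)=0$.

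The main obstacle is the ray $(0,\infty)\times\{x\}$, where one must show $V(t,x)\le U(t,x;x)=\sup_{c>c_{H}}\{-ct\}$ for every $V\in\mathcal S_{0,x}$; by Proposition \ref{prop:W}(ii) this is equivalent to the identity $M(t,x;x)=\sup_{c>c_{H}}\{-ct\}$, i.e.\ to the assertion that the minimal action over closed curves at $x$ of duration $t$ equals $\sup_{c>c_{H}}\{-ct\}$. This is precisely where $c_{H}$ must be used as the \Mane\ critical value, and I would prove it by constructing, for each $t>0$ and $\varepsilon>0$, a closed absolutely continuous curve at $x$ of duration $t$ whose action is at most $\sup_{c>c_{H}}\{-ct\}+\varepsilon$, obtained by concatenating segments of curves calibrated for near-critical viscosity subsolutions furnished by Proposition \ref{prop:Fathi}. (A parallel, more elementary route to the reverse inequality is to verify that $t\mapsto M(t,y;x)$ is convex and lower semicontinuous; then \eqref{eq:duality1} exhibits $c\mapsto -S^{c}(x,y)$ as its convex conjugate evaluated at $-c$, and \eqref{eq:duality2} follows from the Fenchel--Moreau theorem, the restriction $c>c_{H}$ being legitimate because $S^{c}\equiv-\infty$ for $c<c_{H}$ by Proposition \ref{prop:Fathi}; in that route the convexity of $t\mapsto M(t,y;x)$ is the crux.) Once the boundary inequality on $\{y=x\}$ is established, the comparison principle on $(0,\infty)\times(\R^{n}\setminus\{x\})$ gives $V\le U(\cdot\,;x)$ for all $V\in\mathcal S_{0,x}$, hence $M\le U$ and \eqref{eq:duality2}.
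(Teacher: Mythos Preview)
Your proof shares the paper's overall architecture---comparison on the punctured cylinder $(0,\infty)\times(\R^n\setminus\{x\})$, with $U(\cdot\,;x)$ as supersolution---but you reverse the order in which the two cases are handled, and that reversal creates a genuine gap.

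The paper compares $M$ and $U$ directly: Proposition \ref{prop:auxprop} makes $U$ a supersolution away from $y=x$, Proposition \ref{prop:W}(i) makes $M$ a subsolution everywhere, and comparison yields $M\le U$ for $y\ne x$ first. The diagonal case $M(t,x;x)\le U(t,x;x)$ is then \emph{derived} from the off-diagonal case via the concatenation inequality $M(t,x;x)\le M(t-h,y;x)+M(h,x;y)$ together with elementary upper bounds on $S^{c}(x,y)$ and $S^{c}(y,x)$ obtained from straight-line paths and the assumed continuity of $L$ at $(x,0)$; this gives $U(t-h,y;x)+U(h,x;y)\le U(t,x;x)+\varepsilon$ for suitable $h$ and $y$ near $x$.

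You instead want the boundary inequality on $(0,\infty)\times\{x\}$, i.e.\ $M(t,x;x)\le U(t,x;x)=-c_{H}t$, as an \emph{input} to comparison. Both mechanisms you propose for this are problematic. The ``calibrated curves'' route is not supported by Proposition \ref{prop:Fathi}: that proposition gives maximality of $S^{c}(x,\cdot)$ among subsolutions, not existence of curves realizing $\int_{0}^{t}(c+L)=S^{c}(x,\psi(t))$, and constructing near-optimal closed loops at an arbitrary $x$ of prescribed duration with action close to $-c_{H}t$ is a nontrivial weak--KAM statement that does not follow under the paper's hypotheses on $H$. The Fenchel--Moreau route is circular: by \eqref{eq:duality1}, $c\mapsto -S^{c}(x,y)$ is (up to sign conventions) the convex conjugate of $t\mapsto M(t,y;x)$, so $M=M^{**}$, i.e.\ \eqref{eq:duality2}, is \emph{equivalent} to convexity and lower semicontinuity of $t\mapsto M(t,y;x)$. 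Hence that convexity is a consequence of the theorem, not an independently available ingredient, and for state-dependent $L$ there is no obvious direct proof.

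In short, the identification of the boundary pieces and the comparison set-up are correct, but the crux---the inequality at $y=x$---remains unproved in your outline. The paper's trick is precisely to avoid proving it a priori: it extracts the diagonal case from the off-diagonal one using only the continuity of $L$ at $(x,0)$.
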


\begin{proof}
 As mentioned above \eqref{eq:duality1} follows from the variational representation \eqref{eq:Manepotential} of the \Mane\ potential. 

Let us prove \eqref{eq:duality2}. Let $U(t,y; x) =  \sup_{c > c_H}\{S^c(x,y) - ct\}$. 
It follows from \eqref{eq:duality1} that $U(t,y;x) \leq M(t,y;x)$ because
\begin{align*}
	U(t,y; x) =  \sup_{c > c_H}\inf_{s>0}\{M(s,y;x) + c(s-t)\} \leq M(t,y;x). 
\end{align*}
The reverse inequality is proved next. By Proposition \ref{prop:auxprop},  it follows that $U(\cdot\,;x)$ is a viscosity supersolution to \eqref{eq:eHJ} on $(0,\infty) \times \R^n\setminus\{x\}$. By Proposition \ref{prop:W}(i), $M(\cdot\,; x)$ is a viscosity subsolution to \eqref{eq:eHJ}. Since $U(0,x;x) = 0 = M(0,x;x)$ the comparison principle implies that $U(t,y; x) \geq M(t,y;x)$ for all $(t,y) \in (0,\infty) \times \R^n\setminus \{x\}$. This shows \eqref{eq:duality2} for $y \neq x$. It remains to show the inequality for $y=x$, that is that for all $(t,x) \in (0,\infty) \times \R^n$, $M(t,x;x) \leq U(t,x;x)$. 

It follows from the variational representation of $M$ that, for any $y \in \R^n \setminus \{x\}$ and $h \in (0,t)$, 
\begin{align*}
	M(t,x; x) \leq M(t-h,y;x) + M(h,x;y).  
\end{align*}
By the duality result for $y \neq x$ it follows that
\begin{align*}
	M(t-h,y;x) + M(h,x;y) = U(t-h,y;x) + U(h,x;y). 
\end{align*}
The proof is completed by showing that, for any $\epsilon > 0$, we may select $h > 0$ and $y \neq x$ such that 
\begin{align*}
	U(t-h,y;x) + U(h,x;y) \leq U(t,x;x) + \epsilon. 
\end{align*}
To achieve this, take $0 < h < \min \{ 1, t/2 \}$ such that
\begin{align*}
	2h\bigl(L(x,0) + \frac{3}{2}+c_H\bigr) \leq \epsilon. 
\end{align*}
By continuity of $L$ we may select $\delta > 0$ such that 
\begin{align*}
	L(x+z,v) \leq L(x,0) + 1,
\end{align*}
for all $|z| \leq \delta$ and $|v| \leq \delta$. 

Take $y$ such that $h^{-1}|y-x| \leq \delta$. With $\psi(0) = x$, $\psi(h) = y$ and $\dot \psi(s) = h^{-1}(y-x)$ it follows from the variational representation of the \Mane\ potential that
\begin{align}\label{eq:Sxybound}
	S^c(x,y) \leq \int_0^h  c + L\Big(x + \frac{y-x}{h}s, \frac{y-x}{h}\Big) ds \leq  h(c+L(x,0) + 1). 
\end{align}
Similarly, 
\begin{align}\label{eq:Syxbound}
	S^c(y,x) \leq \int_0^h  c + L\Big(y + \frac{x-y}{h}s, \frac{x-y}{h}\Big) ds \leq  h(c+L(y,0) + 1). 
\end{align}
As a consequence of \eqref{eq:Sxybound}, 
\begin{align*}
	U(t-h,y;x) - U(t,x;x) &= \sup_{c > c_H}\{S^c(x,y) - c(t-h)\} + c_H t \\
	&\leq \sup_{c > c_H}\{ h(L(x,0) + 1) + c(2h-t)\} + c_H t \\
	&= h(L(x,0) + 1 + 2c_H). 
\end{align*}
Similarly, by \eqref{eq:Syxbound},
\begin{align*}
	U(h,x;y)  &= \sup_{c > c_H}\{S^c(y,x) - ch\} 
	\leq \sup_{c > c_H}\{ h(L(y,0) + 1)\} = h(L(y,0) + 1),
\end{align*}
and since $|y-x| \leq h \delta \leq \delta$, $h(L(y,0)+1) \leq h(L(x,0) + 2)$.
Combining the two inequalities shows that
\begin{align*}
	U(t-h,y;x) + U(h,x;y) &\leq U(t,x;x) + h(L(x,0) + 1 + 2 c_H) + h(L(x,0) + 2) \\
	&= U(t,x;x) + 2h\bigl(L(x,0) + \frac{3}{2} + c_H\bigr) \\
	& \leq U(t,x;x) + \epsilon, 
\end{align*}
by the choice of $h$. This completes the proof.
\end{proof}

The duality between $S^c(x,y)$ and $M(t,y; x)$ can be given the following intuitive physical interpretation. The optimal $t$ in the representation \eqref{eq:duality1} is the optimal time it takes to move from $x$ to $y$ in a system with energy level $c$. Similarly, the optimal $c$ in the representation \eqref{eq:duality2} is the energy level at which it takes precisely time $t$ to move from $x$ to $y$ along the most cost efficient path.

\subsection{Initial value problems}

Let $V$ be defined by \eqref{eq:V} where the initial function $g$ is uniformly continuous.  From Theorem \ref{thm:duality} the following min-max representation of $V$ is obtained. 
 
 \begin{cor}\label{thm:min-max} For all  $(t,y) \in [0,\infty) \times \R^n$, 
\begin{align}
V(t,y)  &=  \inf_{x} \sup_{c > c_H} \{ g(x) + S^c(x,y) - ct\}. \label{eq:min-max}
\end{align}
\end{cor}

\begin{proof} 
For all  $(t,y) \in [0,\infty) \times \R^n$, it follows from \eqref{eq:duality2} that
\begin{align*}
V(t,y)  &=  \inf_x\{g(x) + M(t,y; x)\} =  \inf_{x} \sup_{c > c_H} \{ g(x) + S^c(x,y) - ct\}.
\end{align*}
\end{proof}

\subsection{Terminal value problems} 

Let the following be given: a time $T > 0$, a Hamiltonian $\bar H$ satisfying \eqref{eq:LipH}, the associated Lagrangian $\bar L$ given by $\bar L(x,v) = \sup_p \{\langle p,v\rangle - \bar H(x,p)\}$, and a uniformly continuous terminal cost function $g$. Consider a terminal value problem with value function, for $(t,x) \in [0,T] \times \R^{n}$,
\begin{align*}
	\bar V(t,x) = \inf_{\psi}\left\{ \int_{t}^{T} \bar L(\psi(s), \dot \psi(s)) ds + g(\psi(T)), 
	\psi(t) = x \right\}, 
\end{align*} 
 where the infimum is taken over all absolutely continuous functions $\psi: [0,T] \to \R^n$ with $\psi(t) = x$. By changing the direction of the paths it follows that $\bar V(t,x)$ is equal to
 \begin{align*}
	\inf\left\{ g(\psi(0)) + \int_{0}^{T-t} \bar L(\psi(s), - \dot \psi(s)) ds,   
	\psi(T-t) = x \right\} = V(T-t, x),
\end{align*}
where $V$ is the value function of the forward problem \eqref{eq:V} with $L(x,v) = \bar L(x,-v)$. The Hamiltonian of the corresponding forward problem is
\begin{align*}
	H(x,p) = \sup_v \{\langle p,v\rangle - L(x,v)\} = \sup_v \{\langle -p,-v\rangle - \bar L(x,-v)\} = \bar 	H(x,-p).
\end{align*}
Since $V$ is the unique continuous viscosity solution to \eqref{eq:HJ} it follows that $\bar V$ is the unique continuous viscosity solution to 
\begin{align*}
\begin{cases}
 \bar V_{t}(t,x) - \bar H(x, -D\bar V(t,x)) = 0, & (t,x) \in [0,T)  \times \R^{n}, \\
	\bar V(T,x) = g(x), & x \in\R^{n}. 
\end{cases}
\end{align*}
For $c > c_{\bar H}$, let $\bar S^c(x,y)$ denote the \Mane\ potential associated with $\bar L$. Then, it holds that $\bar S^c(x,y) = S^c(y,x)$ and the min-max representation of Corollary \ref{thm:min-max} can be written as
\begin{align*}
	\bar V(t,x) = V(T-t,x) &=  \inf_{y} \sup_{c > c_{\bar H}}\{g(y) +  S^c(y,x) - c(T-t)\} \\ 
	&=  \inf_y \sup_{c > c_{\bar H}} \{g(y) + \bar S^c(x,y) - c(T-t)\}. 
\end{align*}

In general, it is not possible to interchange the $\inf$ and $\sup$ in the min-max representation as the following example shows. Note that the function $g$ does not satisfy the conditions of uniform continuity and boundedness. The example nonetheless illustrates what can go wrong when interchanging min and max. In Section \ref{sec:rare} this particular example is discussed further in the context of rare-event simulation.
\begin{exmp} \label{ex:maxmin}
Consider a one-dimensional terminal value problem, with Hamiltonian $\bar H(x,p) = \bar H( p) = p+\frac{1}{2}p^{2}$ and $g(x) = 0$ on $\partial (a,b)$ and $g(x) = \infty$ on $(a,b)$, where $a < 1 < b$ and $b-1 < 1-a$. The \Mane\ critical value is $c_{\bar H} = -1/2$ and the \Mane\ potential is given by
\begin{align*}
	\bar S^c(x,y) = \left\{\begin{array}{ll} (y-x)(-1+\sqrt{1+2c}), & y \geq x,\\
	(x-y)(1+\sqrt{1+2c}), &  y < x. \end{array} \right.
\end{align*}
By performing the optimization it follows that
\begin{align*}
	\sup_{c > c_{\bar H}}\{ \bar S^c(x,y) - c(T-t)\} =  \left\{\begin{array}{ll} \frac{T-t}{2}(\frac{y-x}{T-t} - 1)^{2}, & y \geq x,\\
	\frac{T-t}{2}(\frac{x-y}{T-t} - 1)^{2}, &  y < x. \end{array} \right.
\end{align*}
and, for $x < a$, we have
\begin{align*}
	\bar V (t,x) = \inf_{y \in \{a,b\}} \sup_{c > c_{\bar H}}\{ \bar S^c(x,y) - c(T-t)\} = \inf_{y \in \{a,b\}} \frac{T-t}{2}\Big(\frac{y-x}{T-t} - 1\Big)^{2}. 
\end{align*}
In particular, with $T=1$, we have 
\begin{align*}
	\bar V(0,0) =  \inf_{y \in \{a,b\}}\frac{1}{2}(y - 1)^{2} = \frac{1}{2}(b - 1)^{2}. 
\end{align*}
Consider interchanging the order of the inf and sup. For any $c> c_{\bar H}$ the infimum over the boundary is
\begin{align*}
	\inf_{y \in \{a,b\}}\ \{\bar S^c(0,y)-c\} = \left\{\begin{array}{ll} 
		a(-1+\sqrt{1+2c}) - c, & \text{for } c \geq 0,\\
		b(-1+\sqrt{1+2c}) - c, & \text{for } c < 0.\end{array} \right.
\end{align*} 
An elementary calculation shows that $\sup_{c > c_H} \inf_{y \in \{a,b\}} \{\bar S^c(0,y)-c\}$ is equal to
\begin{align*}
   \Big(\sup_{c \geq 0} \{a(-1+\sqrt{1+2c}) - c\}\Big) \vee \Big(\sup_{c<0}\{ b(-1+\sqrt{1+2c}) - c\}\Big) = 0. 
\end{align*}
We conclude that
\begin{align*}
	\bar V(0,0) = \inf_{y \in \{a,b\}} \sup_{c > c_{\bar H}}\{\bar S^c(0,y) - c\} >  \sup_{c > c_{\bar H}}\inf_{y \in \{a,b\}}  \{\bar S^c(0,y) - c\}. 
\end{align*}
\end{exmp}

\subsection{Problems on domains} 
 
 Let $\Omega \subset \R^n$ be an open domain, $g: \partial \Omega \to \R$ a uniformly continuous function representing the boundary condition and, for $(t,y) \in (0,\infty) \times \Omega$, let
\begin{align*}
	V(t,y) =  \inf_{\psi}\left\{ g(\psi(0)) + \int_{0}^{t} L(\psi(s), \dot \psi(s)) ds,   
	\psi(0) \in \partial \Omega, \psi(t) = y \right\}\!, 
\end{align*}
where the infimum is over all absolutely continuous functions $\psi: [0,\infty) \to \overline{\Omega}$, with $\psi(0) \in \partial \Omega$ and $\psi(t) \in \Omega$, $t > 0$. 
Then, $V$ is the unique continuous viscosity solution to
\begin{align*}
\begin{cases}
	 V_{t}(t,y) + H(y, DV(t,y)) = 0, & (t,y) \in (0,\infty) \times \Omega, \\
	V(0,y) = g(y), &  y \in \partial \Omega. 
\end{cases}
\end{align*}
The min-max representation is given by
\begin{align}
V(t,y)  &= \inf_{x \in \partial \Omega} \sup_{c > c_H} \{ g(x) + S^c(x,y) - ct\}.  \label{eq:min-maxdomain}
\end{align}

The terminal value problem on a domain $\Omega$ is
\begin{align*}
	\bar V(t,x) =  \inf\left\{ \int_{t}^{T} \bar L(\psi(s), \dot \psi(s)) ds + g(\psi(T)),   
	\psi(t) = x, \psi(T)  \in \partial \Omega \right\}, 
\end{align*}
where  $(t,x) \in [0,T) \times \Omega$.  
The function  $\bar V$ is the unique continuous viscosity solution to
\begin{align}\label{eq:termVD}
\begin{cases}
\bar V_{t}(t,x) - \bar H(x, -D\bar V(t,x)) = 0, & (t,x) \in [0,T) \times \Omega, \\
	\bar V(T,x) = g(x), & x \in \partial \Omega. 
\end{cases}
\end{align}
In this case the min-max representation is given by
\begin{align}
\bar V(t,x)  &= \inf_{y \in \partial \Omega} \sup_{c > c_{\bar H}} \{ g(y) + \bar S^c(x,y) - c(T-t)\} \label{eq:min-maxdomainterm}
\end{align}

\subsection{Exit from a domain}

 Let $\Omega \subset \R^n$ be an open domain, let $g: \partial \Omega \to \R$ be the boundary condition and take $T > 0$. Consider the minimal cost $\bar W$ of leaving the domain before time $T$, when starting from $(t,x) \in [0,T) \times \Omega$. The function $\bar W$ is given by
 \begin{align*}
	\bar W(t,x) &=  \inf_{\psi, \sigma}\left\{ \int_{t}^{\sigma} \bar L(\psi(s), \dot \psi(s)) ds + g(\psi(\sigma)),   
	\psi(t) = x, \psi(\sigma)  \in \partial \Omega \right\},
\end{align*}
where $t \leq \sigma \leq T$. By the change of variables, $ \tau = T-\sigma+t,$ and, for $t \leq s \leq T$, 
$\varphi(s) = \psi(t+s-\tau)$.
\begin{align*}
	\bar W(t,x) 
	&=  \inf_{\psi, t \leq \tau \leq T}\biggl\{ \int_{\tau}^{T} \bar L(\varphi(s), \dot \varphi(s)) ds + g(\varphi(T)),   
	\varphi(\tau) = x, \varphi(T)  \in \partial \Omega \biggr\} \\
	&= \inf_{t \leq \tau \leq T} \bar V(\tau, x), \quad (t,x) \in [0,T) \times \Omega, 
\end{align*}
with $\bar V$ as in \eqref{eq:termVD}.  $\bar W$ is the unique continuous viscosity solution to
\begin{align}
\label{eq:exitHJ}
\begin{cases}
	\bar W_{t}(t,x) - \bar H(x, -D\bar W(t,x)) = 0, & (t,x) \in [0,T) \times \Omega, \\
	\bar W(t,x) = g(x), & (t,x) \in [0,T] \times \partial \Omega. 
\end{cases}
\end{align}
In this case $\bar W$ can be represented as
\begin{align}
 \bar W(t,x)  &= \inf_{t \leq \tau \leq T}\inf_{y \in \partial \Omega} \sup_{c> c_{\bar H}}  \{ g(y) + \bar S^c(x,y) - c(T-\tau)\} \label{eq:min-maxexit}
\end{align}

Obviously $\bar W(t, x) \leq \bar V(t,x)$. If $c_{\bar H} \geq 0$, then we also have
\begin{align*}
 \bar W(t,x)  &= \inf_{t \leq \tau \leq T}\inf_{y \in \partial \Omega} \sup_{c> c_{\bar H}}  \{ g(y) + \bar S^c(x,y) - c(T-\tau)\} \\
 & \geq   \inf_{y \in \partial \Omega} \sup_{c > c_{\bar H}}  \inf_{t \leq \tau \leq T} \{ g(y) + \bar S^c(x,y) - c(T-\tau)\} \\
  & \geq    \inf_{y \in \partial \Omega}\sup_{c > c_{\bar H}} \{ g(y) + \bar S^c(x,y) - c(T-t)\}
  \\ &= \bar V(x,t).  	
\end{align*}
We have proved the following. 
\begin{prop}\label{prop:exit}
 If $c_{\bar H} \geq 0$, then $\bar W = \bar V$. 
 \end{prop}

\section{The Hopf-Lax-Oleinik representation}\label{sec:HopfLax}

Suppose the Hamiltonian $H$ is state-independent, that is, $H(x,p) = H(p)$. 
If $g$ is uniformly continuous, then the Hopf-Lax-Oleinik representation, see \cite[Ch. X]{Evans},  states that the function
\begin{align}\label{eq:HopfLax}
	V(t,y) = \inf_{x} \Big\{ g(x) + tL\Big(\frac{y-x}{t}\Big)\Big\},
\end{align}
is the unique continuous viscosity solution to 
\begin{align*}
\begin{cases}
	V_{t}(t,y) + H(DV(t,y)) = 0, & (t,y) \in (0,\infty) \times \R^{n}, \\
	V(0,y) = g(y), & y \in \R^{n}. 
\end{cases}
\end{align*}
We will demonstrate how the Hopf-Lax-Oleinik representation  follows directly from the min-max representation \eqref{eq:min-max}. 
\begin{prop} If $H$ is state-independent, then, for all $y \in \R^{n}$, 
\begin{align*}
V(t,y) = \inf_{x} \sup_{c > c_H} \{ g(x) + S^c(x,y) - ct\} =  \inf_{x} \Big\{ g(x) + tL\Big(\frac{y-x}{t}\Big)\Big\}.
\end{align*}
\end{prop}
\begin{proof}
 We begin by proving the inequality: for each $x$, 
\begin{align*}
\sup_{c > c_{H}} \{S^c(x,y) - ct \} \geq   tL\Big(\frac{y-x}{t}\Big). 
\end{align*}

Take $x \in \R^{n}$, $c> c_{H}$ and observe that for $p$ such that $H( p) = c$
\begin{align*}
	S^{c}(x,y) &= \inf _{\psi , t} \Big\{ \int_{0}^{t} H( p) + L(\dot \psi(s)) ds, \psi(0) = x, \psi(t) = y\Big\} \\
	&\geq \inf _{\psi, t} \Big\{ \int_{0}^{t} \langle p,\dot \psi(s)\rangle ds, \psi(0) = x, \psi(t) = y\Big\} \\
	&= \langle p,y-x \rangle. 
\end{align*}
It follows that
\begin{align*}
	S^{c}(x,y) - ct & \geq \sup_{p: H( p)= c} \{\langle p, y-x \rangle - tH( p)\} \\
	&= t \sup_{p: H( p)= c} \Big\{\langle p, \frac{y-x}{t} \rangle - H( p) \Big\}. 
\end{align*}
By Proposition \ref{prop:Fathi}, $S^{c}(x,y) = -\infty$ for $c < c_{H}$, which implies that the supremum over $c > c_{H}$ can be extended to the whole of $\R$. That is, 
\begin{align*}
	\sup_{c > c_{H}} \{ S^{c}(x,y) - ct\} &= \sup_{c \in \R} \{ S^{c}(x,y) - ct\}\\ & \geq  t \sup_{c \in \R}\sup_{p: H( p)= c} \Big\{\langle p, \frac{y-x}{t} \rangle - H( p)\Big\} \\ 
	&= tL\Big(\frac{y-x}{t}\Big).
\end{align*}

The reverse inequality 
\begin{align*}
\sup_{c > c_H} \{S^c(x,y) - ct\} \leq  tL\Big(\frac{y-x}{t}\Big),
\end{align*}
follows immediately by taking $\dot \psi(s) = (y-x)/t$ and observing that
\begin{align*}
	S^{c}(x,y) &\leq \int_{0}^{t} c + L(\dot \psi(s)) ds = \Big[c+L(\frac{y-x}{t}\Big)\Big] t.
\end{align*}
\end{proof}

\section{Applications in rare-event simulation}\label{sec:rare}
 
The simulation of rare events in stochastic models and the computation of their probabilities is a challenging problem with numerous applications in, for instance, biology, chemistry, engineering, finance, operations research, etc. In the rare-event setting the standard Monte Carlo algorithm fails because few particles will hit the relevant part of the state space, leading to a high relative error. There are several variance reduction techniques to improve computational efficiency that try to control the simulated particles in such a way that they reach the relevant part of the space. Such techniques can, if well designed, reduce the computational cost by several orders of magnitude. Examples of such techniques are importance sampling, multi-level splitting,  and genealogical particle methods. 

The common feature of all algorithms designed for the rare-event setting is that the control mechanism must be carefully chosen to control the relative error. Roughly speaking the large deviations of the stochastic model must be taken into account and guide the design of the algorithm. In a series of papers  \cite{DupuisWang, DW7, DSW, DLW3, VandenWeare} the authors have established the connection between efficient importance sampling algorithms and subsolutions to associated partial differential equations of Hamilton-Jacobi type that arise in large deviation theory. The results can be briefly summarized as follows. To compute an expectation of the form $E[\exp\{-n g(X^n(T))\}I\{X^n(T) \notin \Omega\}]$ the choice of sampling dynamics is associated with a control problem whose value function, in the rare-event limit, is given as the solution $\bar V$ to a Hamilton-Jacobi equation of the form \eqref{eq:termVD}. By constructing a (piecewise) classical subsolution $\bar U$ to \eqref{eq:termVD}, that is a piecewise $C^1(\bar \Omega)$ function $\bar U$ satisfying
\begin{align}
\begin{cases}
	\bar U _t (t,x) - \bar H(x, -D \bar U(t,x)) \geq 0, & (t,x) \in [0,T)\times \Omega, \\
	\bar U(T,x) \leq g(x), & x \in \partial \Omega,
\end{cases} \label{eq:simsub}
\end{align}
 the change of measure can be based on $D\bar U$ and the performance of the resulting algorithm is determined by the initial value $\bar U(0, x_0)$ of the subsolution. Asymptotically optimal performance is obtained if the value of the subsolution at the initial point $(0,x_0)$ coincides with that of the solution, $\bar U(0, x_0) = \bar V(0, x_{0})$. 

In multi-level splitting the situation is similar. In the most simple version of multi-level splitting the state space is partitioned into an increasing sequence of sets $C_{0} \subset C_{1} \subset \dots$ given as the level sets of an importance function $U$. A particle is simulated from an initial point $x_{0}$ and as it crosses over from, say, $C_{k+1}$ to $C_{k}$ for the first time, the particle generates a number of offsprings that are simulated independently of each other. Particles are killed if they reach a termination set. Each particle carries a weight that is updated at every split. By this procedure a random tree is produced, where each leaf is a particle that has either hit the set of interest or been killed. The sum of the weights of the particles that reach the target set is the estimate of the rare-event probability. The design of an efficient multi-level splitting algorithm relies on that the associated importance function is a certain multiple of a viscosity subsolution of the Hamilton-Jacobi equation associated with the large deviations of the system, see \cite{DupuisDean, DupuisDean2}. 

In what follows the emphasis will be on the construction of families of viscosity subsolutions associated with the min-max representation. To be precise, in this section the term viscosity subsolution refers to a function that satisfies the inequalities \eqref{eq:simsub} in the viscosity sense. For brevity the discussion is focused on terminal value problems, for exit problems everything is completely similar.

\subsection{Construction of subsolutions} 
The min-max representation \eqref{eq:min-maxdomainterm} provides at least two convenient ways to construct families of viscosity subsolutions, suitable for the design of rare-event simulation algorithms.

The most obvious way to construct viscosity subsolutions is perhaps to consider the family
\begin{align*}
	\bar U^{c}(t,x) = \inf_{y \in \partial \Omega} \{g(y) + \bar S^{c}(x,y) - c(T-t)\}, \quad c > c_{\bar H}. 
\end{align*}
The optimal choice of $c$ is to take $c$ as the maximizing energy level in the min-max-representation,
\begin{align}\label{eq:copt}
	\inf_{y \in \partial \Omega} \sup_{c > c_{\bar H}} \{g(y) + \bar S^{c}(x_0,y) - cT\}= g(y) + \bar S^{c}(x_0,y) - cT, 
\end{align} 
for the optimal pair $(c,y)$. Then $\bar U^c$ satisfies
\begin{align*}
\begin{cases} 
	\bar U^c_{t}(t,x) - \bar H(x, -D\bar U^c(t,x)) = 0, & (t,x) \in [0,T) \times \Omega, \\
	\bar U^c(T,x) \leq g(x), & x \in \partial \Omega,
\end{cases}
\end{align*}
that is,  $\bar U^c$ is a viscosity subsolution to \eqref{eq:termVD}. Note that $\bar U ^c$ is a subsolution to the exit problem as well if $c_{\bar H} \geq 0$. For either type of problem, let $K_1$ denote the loss in performance for simulation algorithms based on $\bar U ^c$, 
\begin{align*}
K_1 = \bar V (0,x_0) - \bar U ^c (0,x_0).
\end{align*}
If $K_1 = 0$ the subsolution $\bar U ^c$ gives rise to asymptotically optimal simulation algorithms.

The main obstacle when implementing an algorithm based on $\bar U^{c}$ is that the optimization over $y$ may be complicated and must be solved numerically at every $x$, leading to a significant overhead computational cost. 



A considerably simpler  family of viscosity subsolutions is given by
\begin{align*}
	\bar U^{c,y,K_2}(t,x) = g(y) + \bar S^{c}(x_{0},y) - \bar S^{c}(x_{0},x) - c(T-t) - K_2,  
\end{align*}
where $c > c_{\bar H}, y \in \partial \Omega, K_2 \geq 0$ and the constant $K_2$ must be chosen appropriately. Since $\bar S^{c}(x_{0}, x_{0}) = 0$ if follows that 
\begin{align*}
	\bar U^{c,y,K_2}(0,x_{0}) = g(y) + \bar S^{c}(x_{0},y) - cT - K_2,
\end{align*}
and the optimal choice of $(c,y)$ is such that 
\begin{align*}
	\bar U^{c,y,K_2}(0, x_{0}) = \inf_{y \in \partial \Omega} \sup_{c >  c_{\bar H}} \{g(y) + \bar S^{c}(x_{0},y) - cT - K_2\} = \bar V(0, x_{0}) - K_2. 
\end{align*} 
The function $\bar U^{c,y,K_2}$ satisfies
\begin{align*}
	\begin{cases} \bar U^{c,y,K_2}_{t}(t,x) - \bar H(x, -D\bar U^{c,y,K_2}(t,x)) = 0, & (t,x) \in [0,T) \times \Omega, \\
	\bar U^{c,y,K_2}(T,x) =  g(y) + \bar S^c(x_0,y) - \bar S^c(x_0,x) - K_2, & x \in \partial \Omega. 
	\end{cases}
\end{align*}
To satisfy the subsolution property at the terminal time, the boundary condition must be satisfied with inequality, i.e., it is required that $\bar U^{c,y,K_2}(T,x) \leq g(x)$ for each $x \in \partial \Omega$. It is therefore necessary to select 
\begin{align*}
	K_2 &=  \sup_{x \in \partial \Omega} \{g(y) + \bar S^c(x_0,y)-\bar S^c(x_0,x) - g(x)\}  \\&= 
	g(y) + \bar S^c(x_0,y) - \inf_{x \in \partial \Omega} \{g(x) + \bar S^c(x_0,x)\}.
\end{align*}
This shows why the constant $K_2$ must be included in the construction. 

If $\bar U^{c,y,K_2}$ is piecewise $C^{1} (\bar \Omega)$, then performance of the importance sampling algorithm based on $D\bar U^{c,y, K_2}$ is determined by $\bar V(0,x_0) - K_2$. That is, 
$K_2$ determines the loss in performance for simulation algorithms based on $\bar U ^{c,y,K_2}$; asymptotically optimal performance is achieved if $K_2=0$.  

Intuitively, it may seem as if an importance sampling algorithm based on the subsolution $\bar U ^{c}$ should have better performance than one based on $\bar U ^{c,y,K_2}$. However, as the following proposition shows, the two subsolutions actually have the same initial value and the performance of the corresponding simulation algorithms coincides (in the asymptotic sense). Moreover, we provide a sufficient condition for when asymptotic optimality holds.
\begin{proposition}
\label{prop:M}
	(i) The two subsolutions $\bar U ^c$ and $\bar U ^{c,y,K_2}$ have the same initial value, that is $K_1 = K_2 = K$. 
	
	(ii) A sufficient condition for $K=0$ is that there exists a saddle point $(c, y)$ for the min-max representation at the initial point $(0,x_0)$.
\end{proposition}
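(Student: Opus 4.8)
The plan is to extract both parts from a one-line computation built on the min-max representation \eqref{eq:min-maxdomainterm} and the explicit formula for the normalising constant $K_2$. Throughout, let $(c^*,y^*)$ be the optimal pair in $\inf_{y \in \partial \Omega}\sup_{c > c_{\bar H}}\{g(y) + \bar S^c(x_0,y) - cT\}$, meaning $y^*$ attains the outer infimum and $c^*$ the inner supremum; should these extrema fail to be attained one uses $\epsilon$-optimal choices and lets $\epsilon \downarrow 0$ at the end.

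For part (i), I would first record the two identities
\begin{align*}
	\bar V(0,x_0) = g(y^*) + \bar S^{c^*}(x_0,y^*) - c^*T, \qquad \bar U^{c^*}(0,x_0) = \inf_{x \in \partial \Omega}\{g(x) + \bar S^{c^*}(x_0,x)\} - c^*T.
\end{align*}
The first is just \eqref{eq:min-maxdomainterm} together with the choice of $(c^*,y^*)$; the second is the definition of $\bar U^c$ evaluated at $(0,x_0)$. Subtracting, the terms $c^*T$ cancel and
\begin{align*}
	K_1 = \bar V(0,x_0) - \bar U^{c^*}(0,x_0) = g(y^*) + \bar S^{c^*}(x_0,y^*) - \inf_{x \in \partial \Omega}\{g(x) + \bar S^{c^*}(x_0,x)\},
\end{align*}
which is exactly the expression defining $K_2$ at the pair $(c^*,y^*)$; hence $K_1 = K_2 = K$. (Alternatively, substituting $\bar S^{c^*}(x_0,x_0) = 0$ and the formula for $K_2$ into $\bar U^{c^*,y^*,K_2}$ shows directly that $\bar U^{c^*,y^*,K_2}(0,x_0) = \bar U^{c^*}(0,x_0)$, so the two subsolutions share the same initial value and hence the same asymptotic performance.)

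For part (ii), suppose $(c^*,y^*)$ is a saddle point of $(c,y) \mapsto g(y) + \bar S^c(x_0,y) - cT$ over $\{c > c_{\bar H}\} \times \partial \Omega$. Fixing $c = c^*$ in the saddle inequalities shows that $y^*$ minimises $y \mapsto g(y) + \bar S^{c^*}(x_0,y)$ over $\partial \Omega$, that is,
\begin{align*}
	g(y^*) + \bar S^{c^*}(x_0,y^*) = \inf_{x \in \partial \Omega}\{g(x) + \bar S^{c^*}(x_0,x)\}.
\end{align*}
Inserting this into the formula for $K$ from part (i) yields $K = 0$. (As a check, this same identity is precisely what makes the boundary requirement $\bar U^{c^*,y^*,0}(T,x) \leq g(x)$, $x \in \partial \Omega$, hold with $K_2 = 0$.)

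The whole argument is essentially bookkeeping, and the only delicate point is the interpretation of the optimal pair when the infimum over $\partial \Omega$ or the supremum over $c$ is not attained, which is dealt with by the $\epsilon$-optimisation remark. The one informative step is the cancellation in part (i), which is what reveals that the cruder-looking family $\bar U^{c,y,K_2}$ sacrifices nothing in asymptotic performance compared with $\bar U^c$.
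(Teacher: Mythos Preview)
Your proof is correct and follows essentially the same approach as the paper's own proof: for (i) you subtract $\bar U^{c^*}(0,x_0)$ from $\bar V(0,x_0)$ using the optimal pair $(c^*,y^*)$ and observe that the result is precisely the defining expression for $K_2$, and for (ii) you use the saddle-point inequality at fixed $c=c^*$ to see that $y^*$ attains the infimum over $\partial\Omega$, forcing $K=0$. Your remark about $\epsilon$-optimisers when the extrema are not attained is a sensible addendum that the paper leaves implicit.
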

\begin{proof}
Proof of (i). Let  $(c,y)$ be the pair of energy level and boundary point that is optimal at $(0,x_0)$, 
	\begin{align*}
		\inf _{y \in \partial \Omega} \sup _{c > c_{\bar H}} \{ g(y) + \bar S ^{c} (x_0,y) - c T\} = g(y) + \bar S ^{c} (x_0,y) - c T.
	\end{align*}
	The difference between $\bar V (0, x_0)$ and $\bar U ^{c} (0,x_0)$ is
	\begin{align*}
		K_1 &= \bar V(0,x_0) - \bar U ^{c} (0,x_0) \\
		& = \inf _{y \in \partial \Omega} \sup _{c > c_{\bar H}} \{ g(y) + \bar S ^{c} (x_0,y) - c T\} - \inf _{y \in \partial \Omega} \{ g(y) + \bar S ^{c} (x_0, y) - c T\}.
	\end{align*}
	By the choice of $(c,y)$,
	\begin{align*}
		\bar V (0,x_0) = g(y) + \bar S ^{c} (x_0,y) - c T,
	\end{align*}
	and it follows that
	\begin{align*}
		K_1 &= g(y) + \bar S ^{c} (x_0,y) - c T - \inf _{y \in \partial \Omega} \{ g(y) + \bar S ^{c} (x_0, y) - c T \} \\
			& = g(y) + \bar S ^{c} (x_0,y) - \inf _{y \in \partial \Omega} \{ g(y) + \bar S ^{c} (x_0, y)\}.
	\end{align*}
	This is precisely the definition of $K_2$ and the proof of (i) is complete.
	
	Proof of (ii). 
%
	Let $f$ be defined as the function 
	\begin{align*}
		f(c,y) = g(y) + S^c (x_0,y) - cT.
	\end{align*}
	The maximal initial value is then $\bar V (0,x_0) = \inf _{y \in \partial \Omega} \sup_{ c > c_{\bar H}} f(c,y)$. 	Take $(c^*,y^*)$ to be a saddle point to the min-max representation at the initial point $(0,x_0)$,
	\begin{align*}
		\inf _{y \in \partial \Omega} \sup _{c > c_{\bar H}} f(c,y) & \leq f(c^*, y^*) \leq \sup _{c > c_{\bar H}} \inf _{y \in \partial \Omega} f(c,y).
	\end{align*}
	From (i) $K = K_1 = K_2$ satisfies
	\begin{align*}
		K &= g(y^*) + \bar S ^{c^*} (x_0,y^*) - \inf _{y \in \partial \Omega} \{ g(y) + \bar S ^{c^*} (x_0, y)\} \\
		& = g(y^*) + \bar S ^{c^*} (x_0,y) -c^* T - \inf _{y \in \partial \Omega} \{ g(y) + \bar S ^{c^*} (x_0, y) - c^* T\} \\
		& = f(c^*, y^*) - \inf _{y \in \partial \Omega} f(c^*,y).	
	\end{align*}
	The assumption that $(c^*, y^*)$ is a saddle point implies that
	\begin{align*}
		f(c^*, y^*) \leq f(c^*,y), \ \forall y \in \partial \Omega,
	\end{align*}
	and thus that $K \leq 0$. The reverse inequality, $K \geq 0$, is immediate and we conclude that $K = 0$.
%
%
%
\end{proof}
Although Proposition \ref{prop:M} shows that simulation algorithms based on the two subsolutions have the same asymptotic performance, the reasons for the potential loss in performance, $K$, are different in the two cases. For $\bar U ^c$, the terminal condition is guaranteed to hold due to the infimum over $\partial \Omega$. This however may cause a misspecification of the optimal boundary point $y$ at $(0,x_0)$, leading to a possible decrease in the initial value. For the second subsolution, $\bar U ^{c,y, K}$, the pair $(c,y)$ is chosen at the initial point and is therefore the optimal choice. However, the terminal condition is not guaranteed to hold and the constant $K$ must be included for this reason, causing a potential loss in performance. There are of course many situations in which $K=0$, which implies that $\bar U ^c$ and $\bar U^{c,y,0}$ are viscosity subsolutions and the associated rare-event simulation algorithms have asymptotically optimal performance. 

%
%

Before proceeding to some examples, let us point out the rather remarkable property that $D\bar U^{c,y,K}(t,x) = -D\bar S^{c}(x_{0}, x)$ does not depend explicitly on $y$ and therefore not on explicitly on the domain $\Omega$. This implies that, except for the optimal choice of $c$, chosen initially,  the way to change the measure in the importance sampling algorithm does not depend on the domain $\Omega$. The change of measure only has the effect to move away from the law of large numbers trajectories, but the change of measure does not take into account the shape of the domain. This class of subsolutions is particularly useful if the boundary of $\Omega$ is complicated. 

\subsection{Importance sampling  for small-noise diffusions}
In this section the construction of viscosity subsolutions outlined above is illustrated in the setting of small-noise diffusions. For simplicity we only consider one-dimensional examples. The theory for multi-dimensional diffusions is, of course similar, but the details are more involved. We comment on the multi-dimensional case at the end of this section. 

For the purpose of illustration, let $\{X^{\epsilon}(t); t \in [0,\infty)\}_{ \epsilon > 0}$ be a collection of one-dimensional diffusion processes such that, for each $\epsilon > 0$,  $X^{\epsilon}$ is the unique strong solution to the stochastic differential equation
\begin{align}\label{eq:smallnoisediffusion}
	dX^{\epsilon}(t) = b(X^{\epsilon}(t)) dt + \sqrt{\epsilon} \sigma(X^{\epsilon}(t)) dB(t), \quad X^{\epsilon}(0) = x_{0}, 
\end{align}
where $B$ is a Brownian motion and $b$, $\sigma$ are Lipschitz continuous and satisfy appropriate growth conditions so that a strong solution exists. In this example we take $b(x) = -D\Phi(x)$ where $\Phi$ is a potential function with a local minimum at $x_0$. 

Let $\Omega = (a,b)$ be an open set with $x_0 \in \Omega$ and define the stopping time $\tau^{\epsilon}$ as the first exist time of $\Omega$, $ \tau ^{\epsilon}= \inf\{t > 0: X^{\epsilon}(t) \in \partial \Omega\}$. We are interested in computing $\Prob ( \tau^{\epsilon} \leq T)$, the probability that the diffusion leaves the domain $\Omega$ before $T$.  

From the work of \cite{DupuisSpilio, VandenWeare} it follows that an importance sampling estimator for this quantity is based on sampling $X^{\epsilon}$ from a distribution $Q^{\epsilon}$ given by the Girsanov transformation
\begin{align*}
	\frac{dQ^\epsilon}{d\Prob} = \exp\Big\{ - \frac{1}{2\epsilon} \int_{0}^{T} \theta(t,X^{\epsilon}(t))^{2} dt + \frac{1}{\sqrt{\epsilon}} \int_{0}^{T} \theta(t, X^{\epsilon}(t)) dB(t)\Big\}, 
\end{align*}
where $\theta(t, x) = -\sigma(x) D\bar U(t,x)$ and $\bar U$ is a classical (or piecewise classical) subsolution to the Hamilton-Jacobi equation 
\begin{align*}
\begin{cases}
	\bar W_t(t,x) - \bar H(x,-D \bar W(t,x)) = 0, & (t,x) \in (0,T) \times \Omega, \\
	\bar W(t,x) = 0,  & (t,x) \in (0,T] \times \partial \Omega. 
\end{cases}
\end{align*}
Here the Hamiltonian $\bar H$ is given by 
\begin{align*}
	\bar H(x,p) = -D\Phi(x) p + \frac{1}{2}|\sigma(x) p|^{2}. 
\end{align*}
In this case $c_{\bar H} = 0$, see Example \ref{ex:diffusion},  and the \Mane\ potential can be computed as
\begin{align*}
	\bar S^{c}(x,y)  = \int_{x}^{y} \frac{1}{\sigma(z)}\Big(\frac{D\Phi(z)}{\sigma(z)} + \sign(z-x) \sqrt{\frac{D\Phi(z)^{2}}{\sigma^{2}(z)} + 2c} \;\Big)dz, \ x,y \in (a,b).
\end{align*}
To see this, recall that $y \mapsto \bar S^{c}(x,y)$ is a viscosity solution to $\bar H(y, DS(y)) = c$ at all $y \in (a,b)$, $y \neq x$, and note that all solutions $p$ to $\bar H(y,p(y)) = c$ are of the form
\begin{align}\label{eq:expr}
	p(y)= \frac{1}{\sigma(y)} \Big(\frac{D\Phi(y)}{\sigma(y)} \pm \sqrt{\frac{D\Phi(y)^{2}}{\sigma^{2}(y)} + 2c}\;\Big).
\end{align}
The \Mane\ potential $\bar S^{c}(x,\cdot)$ is a primitive function of $p$, and the maximal of all subsolutions vanishing at $x$, see Proposition \ref{prop:Fathi}(ii). Therefore the $\pm$ sign must be selected as $\sign(z-x)$.  


Let us explain the construction of the families $\{\bar U^c\}$ and $\{\bar U^{c,y,K}\}$ of viscosity subsolutions in this particular setting. 
Since $c_{\bar H} = 0$ it follows from Proposition \ref{prop:exit} that $\bar W = \bar V$ where $\bar V$ is the unique continuous viscosity solution to 
 \begin{align*}
 \begin{cases}
	\bar V_t(t,x) - \bar H(x,-D\bar V(t,x)) = 0, & (t,x) \in (0,T) \times \Omega, \\
	\bar V(T,x) = 0,  & x \in \partial \Omega. 
\end{cases}
\end{align*}
Consider first, with $c$ chosen at the initial point, 
\begin{align*}
	\bar U^{c}(t,x) &= \inf_{y \in \partial \Omega} \{\bar S^{c}(x,y) - c(T-t)\} \\
	&=  \bar S^{c}(x,a) \wedge \bar S^{c}(x,b) - c(T-t).
\end{align*}
Given the optimal choice of $c$, the corresponding change of measure is determined by
\begin{align*}
	\theta^{c}(t,x) &= -\sigma(x) D\bar U^{c}(t,x) \\
	&= 
	\left\{\begin{array}{rr}
		\frac{D\Phi(x)}{\sigma(x)} + \sqrt{\frac{D\Phi(x)^{2}}{\sigma^{2}(x)} + 2c}, & \text{if } \bar S^{c}(x,b) \leq \bar S^{c}(x,a), \\
		\frac{D\Phi(x)}{\sigma(x)} -\sqrt{\frac{D\Phi(x)^{2}}{\sigma^{2}(x)} + 2c}, & \text{if } \bar S^{c}(x,a) \leq \bar S^{c}(x,b), \end{array} \right. \\
		&= \frac{D\Phi(x)}{\sigma(x)} + \sign(\bar S^{c}(x,a) - \bar S^{c}(x,b)) \sqrt{\frac{D\Phi(x)^{2}}{\sigma^{2}(x)} + 2c}. 
\end{align*}
Next, consider
 \begin{align*}
	\bar U^{c,y, K}(t,x) &= \bar S^{c}(x_{0},y) - \bar S^{c}(x_{0}, x) - c(T-t) - K, 
\end{align*} 
where $y = a$ if $\bar S^{c(a)}(x_{0},a) - c(a)T < \bar S^{c(b)}(x_{0},b) - c(b)T$ and $y = b$ if the reverse inequality holds (we emphasize here the dependence of $c$ on $y$). Given the optimal value of $(c,y)$, the change of measure is given by
\begin{align*}
	 \theta^{c,y}(t,x) = -\sigma(x) D\bar U^{c,y,K}(x,t) &= 
	\frac{D\Phi(x)}{\sigma(x)} + \sign(x-x_{0}) \sqrt{\frac{D\Phi(x)^{2}}{\sigma^{2}(x)} + 2c}.  
\end{align*}

For the changes of measure, determined by $\theta^{c}$ and $\theta^{c,y}$, let us determine the corresponding dynamics under $Q^\epsilon$. 
By Girsanov's theorem, it holds that 
\begin{align*}
 B^{\epsilon} (t) = B(t) - \frac{1}{\sqrt{\epsilon}} \int _0 ^{t} \theta(s, X^{\epsilon}(s))ds
\end{align*}
is a $Q^{\epsilon}$-Brownian motion on $[0, \tau ^{\epsilon}]$ and $X^{\epsilon}$ satisfies $X ^{\epsilon}(0)=x_{0}$ and
\begin{align*}
	d X^{\epsilon} (t) &= -D\Phi(X^{\epsilon} (t))dt + \sigma(X^{\epsilon}(t))\theta(X^{\epsilon}(t)) dt + \sqrt{\epsilon} \sigma(X^{\epsilon}(t) dB^{\epsilon}(t) \\
	&= \mu(X^\epsilon(t)) dt + \sqrt{\epsilon} \sigma(X^{\epsilon}(t)) dB^\epsilon(t), 
	\end{align*}
	where the drift $\mu (X^{\epsilon}(t)$ is given by
	\begin{align*} 
	\mu (X^{\epsilon}(t) = \sign(\bar S^{c}(X^{\epsilon}(t), a) - \bar S^{c}(X^{\epsilon}(t), b))\sqrt{D\Phi(X^{\epsilon}(t))^{2} + 2c\sigma^{2}(X^{\epsilon}(t))},
\end{align*}
if $\theta = \theta^{c}$, and 
\begin{align*}
	\mu (X^{\epsilon}(t) = \sign(X^{\epsilon}(t)-x_{0})  \sqrt{D\Phi(X^{\epsilon}(t))^{2}+ 2c\sigma^{2}(X^{\epsilon}(t))},
\end{align*}
if $\theta = \theta^{c,y}$.
\begin{exmp}[Numerical illustration]
Consider computing the probability $P (\tau^{\epsilon} \leq T )$ for a diffusion with a double-well potential given by $\Phi(x) = \frac{1}{2}(x^{2}-1)^{2}$. Take $\sigma(x) = 1$, $\Omega = (-1.42, 1.42)$ and $x_0 =1$. Estimates of the probability $P (\tau ^{\epsilon} \leq T )$ and corresponding relative errors for different values of $\epsilon, \ T$ are shown in Table \ref{table:doubleWell2}. The estimates and relative errors were computed over $50$ batches of $N=10^4$ samples each and with a time discretization of $T \times 10^{-3}$; the subsolution based on $\theta ^{c,y}$ was used to define the sampling dynamics.
\begin{table}[ht]
\caption{Estimates of $P(\tau ^{\epsilon} \leq T)$ and corresponding relative errors; $\Omega = (-1.42,1.42)$, $x_0 = 1$.}
\label{table:doubleWell2}
\centering
\resizebox{\columnwidth}{!}{%
\begin{tabular}{c | c | c | c | c| c| c| c| c}
\hline \hline
 & \multicolumn{2}{c|}{$T=0.25$} & \multicolumn{2}{c|}{{$T=0.5$}} & \multicolumn{2}{c|}{{$T=1$}} & \multicolumn{2}{c}{{$T=2$}} \\
\hline
 $\epsilon$ & Est. & Rel. err. & Est. & Rel. err. & Est. & Rel. err. & Est. & Rel. err. \\ 
 \hline
 0.09 & 3.898e-6 & 0.0254 & 2.373e-5 & 0.0174 & 6.717e-5 & 0.0253 & 1.599e-4 & 0.154 \\
 0.05 & 1.922e-10 & 0.0308 & 2.457e-9 & 0.0233 & 8.641e-9 & 0.0325 & 2.276e-8 & 0.185  \\
 0.03 & 6.876e-17 & 0.0332 & 2.424e-15 & 0.0296 & 1.098e-14 & 0.0437 & 3.469e-14 & 0.256  \\
 \hline
\end{tabular}
}
\end{table}
\end{exmp}
Before proceeding it must be noted that, although the performance of the algorithm is very good in the previous example, our construction of subsolutions does not address the problems of diminishing performance that may arise when the time horizon is large, as reported and treated in \cite{DupuisSpilio}. This is somewhat hinted at in Table \ref{table:doubleWell2} for $T=2$. In fact, for large $T$ the optimal energy level $c$ will approach $c_H = 0$ and the \Mane\ potential will approach the Freidlin-Wentzell quasi potential. 

In situations where the inf and the sup in the min-max representation cannot be interchanged importance sampling algorithms based on $\bar U^{c,y,K}$ may have poor performance. This is illustrated in the following toy problem, which is closely related to \cite[Sec.\ 3.4, Ex.\ 1]{DupuisWang}.
\begin{exmp} Let $\{X^{\epsilon}(t); t \in [0,\infty)\}_{ \epsilon > 0}$ satisfy $X^\epsilon(0) = 0$ and 
\begin{align*}
	dX^\epsilon(t) = dt + \sqrt{\epsilon} dB(t). 
\end{align*}
We are interested in constructing an efficient rare-event simulation algorithm for computing $\Prob (X^\epsilon(1) \notin (a,b) )$. 
The associated Hamiltonian is $\bar H(x,p) = \bar H(p) = p+\frac{1}{2}p^2$, which is the Hamiltonian encountered in Example \ref{ex:maxmin}. With $a$ and $b$ as in  Example \ref{ex:maxmin} it follows that 
\begin{align*}
	\bar U^{c,y,K}(t,x) = \bar S^c(0,y) - \bar S^c(0,x) - c(1-t) - K,
\end{align*}
with the optimal choice of $c$ and $y$ being $(c,y) = ((b^2-1)/2, b)$.  The change of measure based on $\bar U^{c,y,K}$ is given by $\theta^{c,y} = D\bar S^c(0,x) = \sign (x) b-1$ and the resulting dynamics under $Q^\epsilon$ is
\begin{align*}
	dX^\epsilon(t) = \sign(X^{\epsilon}(t))b dt +  \sqrt{\epsilon} dB^\epsilon(t). 
\end{align*}
The performance of the algorithm based on $\bar U^{c,y,K}$ is determined by the initial value $\bar U^{c,y,K}(0,0) = \bar V(0,0) - K$, where
\begin{align*}
	K = \bar S^c(0,b)-\bar S^c(0,a) = b(-1+b) - a(-1+b) = (b-a)(b-1). 
\end{align*}
We conclude that if $b-a$ is large, then the performance of the algorithm may be poor. 

\end{exmp}

In this section the construction of appropriate subsolutions in the context of small-noise diffusions has been illustrated in the one-dimensional setting. The multi-dimensional setting is more challenging. In particular, the computation of the \Mane\ potential is more involved. Since $y \mapsto \bar S^c(x,y)$ is a viscosity solution to the stationary Hamilton-Jacobi equation it follows that its gradient must be of the form $p$ where $p$ solves 
\begin{align*}
	c = \bar H(y,p(y)) = \langle -D\Phi(y), p(y)\rangle + \frac{1}{2}|\sigma(y)p(y)|^2. 
\end{align*}
In some cases the gradient $p$ of the \Mane\ potential can be found via the method of characteristics, see e.g.\ \cite{Evans}. The theory outlined in this paper shows that if the gradient of the \Mane\ potential or the \Mane\ potential itself can be found, then efficient rare-event simulation algorithms can be constructed, but it does not provide answers in situations where they are difficult to find.

\subsection{Importance sampling for birth-and-death processes}
Consider a collection $\{X^n (t); t \in [0,T ]\}_{n\geq 1}$ of one-dimensional continuous-time birth-and-death processes on $\N/n$, starting at $X^n_{0} = x_0$, having birth rates $n \lambda(x)$ and death rates $n \mu(x)$. Here $\lambda, \mu: \R \to [0,\infty)$ are assumed to bounded and Lipschitz continuous. 
The infinitesimal generator $\mathcal{A}^n$ of $X^n$ is given by
\begin{align*}
	\mathcal{A}^n f(x) = n \lambda(x)\Big(f(x+n^{-1})-f(x)\Big) + n\mu(x)\Big(f(x-n^{-1})-f(x)\Big). 
\end{align*}

Take an open interval $(a,b) \subset \R$ with $x_{0} \in (a,b)$ and denote the exit time of $\Omega$ by $\tau ^n = \inf\{t \geq 0: X^n(t) \notin \Omega\}$. We are interested in computing the exit probability $P(\tau^n \leq T)$, for some fixed $T > 0$.

The Hamiltonian associated with the birth-and-death process is given by  
\begin{align*}
\bar H(x,p) = \mu(x)(e^{-p}-1)+\lambda(x)(e^p -1). 
\end{align*}
For simplicity,  we make the additional assumption that $\mu(x_{0}) = \lambda(x_{0})$, so that the \Mane\ critical value is $c_{\bar H} = 0$, see Example \ref{ex:birthdeath}. 

Under the stated assumptions, the sequence $\{ X^n \}$ satisfies the large deviation principle in $\mathcal{D} ([0,T]; \R)$ with rate function
\begin{align*}
	I(\psi) = \int_0^T \bar L(\psi(s), \dot \psi(s)) ds, \quad \psi(0) = x_0, 
\end{align*}
where $\psi$ is absolutely continuous and $\bar L(x,v) = \sup_p\{ pv-\bar H(x,p)\}$ is the local rate function, see e.g.\ \cite{FengKurtz, ShwartzWeiss}.

Similarly to the work of \cite{DLW3, DWJackson} it follows that an importance sampling estimator for the exit probability is obtained by sampling $X^{n}$ independently from a distribution $Q^{n}$ with $P \ll Q^{n}$ and take the estimator as the sample mean of
\begin{align*}
	\frac{dP}{dQ ^n}(X^n) I\{ \tau ^{n}\leq T\}. 	
\end{align*}
The sampling distribution,  $Q^n$,  is a probability measure, parametrized by $\theta^n$,  such that $X^n$, under $Q^{n}$,  is a birth-and-death process with birth and death rates given by
\begin{align*}
	\lambda^{Q^n}(x) = \lambda(x)e^{-\theta^n(x)}, \; \mu^{Q^n}(x) = \mu(x)e^{\theta^n(x)}. 
\end{align*} 

An efficient estimator is obtained by taking $\theta^{n} = -D\bar U$ where $\bar U$ is a
 classical (or piecewise classical) subsolution of
\begin{align*}
\begin{cases}
	\bar W_t(t,x) - \bar H(x,-D \bar W(t,x)) = 0, & (t,x) \in (0,T) \times \Omega, \\
	 \bar W(t,x) = 0, & (t,x) \in (0,T] \times \partial \Omega, 
\end{cases}
\end{align*}
with the property that $\bar U(0,x_{0}) = \bar W(0,x_{0})$. Since $c_{\bar H} = 0$ it follows from Proposition \ref{prop:exit} that $\bar W = \bar V$ where $\bar V$ is the solution to the terminal value problem 
\begin{align*}
\begin{cases}
	\bar V_t(t,x) - \bar H(x,-D \bar V(t,x)) = 0, & (t,x) \in (0,T) \times \Omega, \\
	\bar V(T,x) = 0,  & x \notin \Omega.  
\end{cases}
\end{align*}

In this example the function 
\begin{align*}
	p^c(y) = \log\left[\frac{c+\lambda(y)+\mu(y)}{2\lambda(y)}\pm\sqrt{\Big(\frac{c+\lambda(y)+ \mu(y)}{2\lambda(y)}\Big)^2-\frac{\mu(y)}{\lambda(y)} }\;\right], 
\end{align*}
is the solution to $\bar H(y,p^c(y)) = c$. The \Mane\ potential $y \mapsto \bar S^{c}(x,y)$ is a primitive function of $p^c$, and the maximal of all viscosity subsolutions vanishing at $x$, see Proposition \ref{prop:Fathi}. Therefore the $\pm$ sign must be taken as positive for trajectories to the right, $y > x$, and negative for trajectories to the left, $y < x$.  Consequently, the \Mane\ potential is given by
\begin{align*}
	\bar S^c(x,y) &= \int_x^y \log\Biggl[\frac{c+\lambda(z)+\mu(z)}{2\lambda(z)} +\sign(z-x) \sqrt{\Big(\frac{c+\lambda(z)+\mu(z)}{2\lambda(z)}\Big)^2-\frac{\mu(z)}{\lambda(z)} }\;\Biggr] dz. 
\end{align*}

Let us illustrate the two families $\{\bar U^c\}$ and $\{\bar U^{c,y,K}\}$ of viscosity subsolutions in this setting. Consider first
\begin{align*}
	\bar U^{c}(t,x) &= \inf_{y \in \partial \Omega}\{\bar  S^{c}(x,y) - c(T-t)\} \\
	&= \bar S^{c}(x,a) \wedge \bar S^{c}(x,b) - c(T-t). 
\end{align*}
Given the optimal choice of $c$ the new birth and death rates are given by
\begin{align*}
	\lambda^{Q^{n}}(x) &= \lambda(x) \Biggr[\frac{c+\lambda(x)+\mu(x)}{2\lambda(x)} +\sign(\bar S^{c}(x,a)-\bar S^{c}(x,b)) \sqrt{\Big(\frac{c+\lambda(x)+\mu(x)}{2\lambda(x)}\Big)^2-\frac{\mu(x)}{\lambda(x)} }\;\Biggr],
\end{align*}
\begin{align*}
	\mu^{Q^{n}}(x) &= \mu(x) \Biggl[\frac{c+\lambda(x)+\mu(x)}{2\lambda(x)} +\sign(\bar S^{c}(x,a)-\bar S^{c}(x,b)) \sqrt{\Big(\frac{c+\lambda(x)+\mu(x)}{2\lambda(x)}\Big)^2-\frac{\mu(x)}{\lambda(x)} }\;\Biggr]^{-1}\!\!\!\!\!.
\end{align*}


Next, consider $\bar U^{c,y,K}$ given by
\begin{align*}
	\bar U^{c,y,K}(t,x) = \bar  S^{c}(x_{0},y) - \bar S^{c}(x_{0},x) - c(T-t) - K. 
\end{align*}
Given the optimal choice of $c$ the new birth and death rates are given by
\begin{align*}
	\lambda^{Q^{n}}(x) &= \lambda(x) \Biggl[\frac{c+\lambda(x)+\mu(x)}{2\lambda(x)} +\sign(x-x_{0}) \sqrt{\Big(\frac{c+\lambda(x)+\mu(x)}{2\lambda(x)}\Big)^2-\frac{\mu(x)}{\lambda(x)} }\;\Biggr],
\end{align*}
\begin{align*}
\mu^{Q^{n}}(x) &= \mu(x) \Biggl[\frac{c+\lambda(x)+\mu(x)}{2\lambda(x)} +\sign(x- x_{0}) \sqrt{\Big(\frac{c+\lambda(x)+\mu(x)}{2\lambda(x)}\Big)^2-\frac{\mu(x)}{\lambda(x)} }\;\Biggr]^{-1}\!\!\!\!\!.
\end{align*}

\begin{exmp}[Numerical illustration]
Consider a birth-and-death process $X^n$ with rates $\lambda(x) = \rho x (1-x)$, some $\rho >0$, and $\mu (x)=x$. The process $X^n$ can be thought of as the ratio of infected individuals in a population of size $n$ where infected individuals immediately upon recovery are again susceptible (the SIS model). Table \ref{table:birthDeath1} shows estimates of the probability $P( \tau ^n \leq T )$, and corresponding relative errors, for $\rho = 3$, $\Omega = (1/2, 5/6)$, $x_0 = 2/3$ and $T=1/2$. All estimates and relative errors were computed over $50$ batches of $N = 10^3$ samples each.
\begin{table}[ht]
\caption{Estimates and relative errors of $\Prob(\tau ^n \leq T)$ for a birth-and-death process with rates $\lambda(x) =3x(1-x)$, $\mu(x) = x$, $ \Omega = (1/2, 5/6)$, $x_0 = 2/3$ and $T=1/2$.}
\label{table:birthDeath1}
\centering
\begin{tabular}{c | c | c}
\hline \hline
 $n$ & Est. & Rel. err.\\ 
 \hline
100 & 7.806e-3 & 0.0438 \\
200 & 5.289e-5 & 0.0512\\
300 & 4.421e-7 & 0.0732\\
400 & 6.891e-9 & 0.0736\\
500 & 6.479e-11 & 0.101\\
 \hline
\end{tabular}
\end{table}
\end{exmp}
Table \ref{table:birthDeath1} illustrates good performance of the proposed importance sampling algorithm as $n$ increases. 

\section*{Appendix}

\begin{proof}[Proof of Proposition \ref{prop:W}]
Take $x \in \R^n$.  First we prove the viscosity subsolution property. 
Suppose that $M(\cdot\, ;  x)-v$ has a local maximum at $(t_0, y_0)$ and, contrary to what we want to show, that $v_t(t,y) + H(y,Dv(t,y)) \geq \theta > 0$ for $|t-t_0| + |y-y_0| \leq \delta$, for some $\delta > 0$. We may assume that $\delta$ is sufficiently small that 
\begin{align*}
	M(t,y; x) - v(t,y) \leq M(t_0, y_0; x) - v(t_0, y_0), \quad \text{for } |t-t_0| + |y-y_0| \leq \delta.
\end{align*}
Take any $h > 0$ and $y$ with $h+|y-y_0| \leq \delta$ and consider any trajectory $\psi$ such that $\psi(0) = y$, $\psi(h) = y_0$ and $|\psi(s) - y_0| \leq \delta$ for all $s \in [0,h]$. By optimality and the last inequality
\begin{align*}
	0 &\geq M(t_0,y_0; x) - M(t_0-h,y; x) - \int _{0} ^ {h} L(\psi(s), \dot \psi(s))ds \\
	& \geq v(t_0,y_0) - v(t_0-h,y) - \int _{0} ^ {h} L(\psi(s), \dot \psi(s))ds \\
	&= \int _{0} ^{h} \frac{d}{ds} v (s,\psi (s)) - L(\psi(s), \dot \psi(s))ds \\
	& = \int _{0} ^{h} v_t(s,\psi(s)) + \langle Dv(\psi(s)), \dot \psi (s) \rangle - L(\psi(s), \dot \psi(s))  ds.
\end{align*}
We may assume that $h$ and $\dot \psi$ are chosen such that, using the conjugacy between $H$ and $L$,
\begin{align*}
	H(\psi(s) , Dv(s, \psi(s))) \leq \langle Dv(s, \psi(s)), \dot \psi(s) \rangle - L(\psi(s), \dot \psi(s)) + \frac{\theta h}{2},
\end{align*}
for all $s\in [0,h]$. Then
\begin{align*}
	\frac{\theta h}{2} \geq \int _{0} ^{h} v_t(s,\psi(s)) + H(\psi(s), Dv(s,\psi(s))) ds \geq \theta h,
\end{align*}
which is a contradiction. Thus, it must indeed hold that
 $$v_t(t_0, y_0) + H(y_0, Dv(t_0, y_0)) \leq 0.$$

Next we prove the supersolution property on $\R^n \setminus \{x\}$. Suppose $M(\cdot\, ; x)-v$ has a local minimum at $(t_0, y_0)$ with $y_0 \neq x$ and, contrary to what we want to show, that $v_t(t,y) +H(y,Dv(t,y)) \leq -\theta < 0$ for $|t-t_0| + |y-y_0| \leq \delta$, for some $\delta > 0$. We may assume that $\delta$ is sufficiently small that $|x-y_0| > \delta$ and
\begin{align*}
	M(t,y; x) - v(t,y) \geq M(t_0,y_0; x) - v(t_0, y_0), \quad \text{for } |t-t_0| +  |y-y_0| \leq \delta.
\end{align*}
By optimality we may select $h> 0$ and $y$ with $h+|y-y_0| \leq \delta$ and a trajectory $\psi$ such that $\psi(0) = y$, $\psi(h) = y_0$ and $s+|\psi(s) - y_0| \leq \delta$ for all $s \in [0,h]$, with the property that
\begin{align*}
	M(t_0, y_0; x) \geq M(t_0-h, y; x) + \int_0^h L(\psi(s), \dot \psi(s)) ds - \frac{\theta h}{2}.
\end{align*}
The last inequality implies
\begin{align*}
	\frac{\theta h}{2} &\geq M(t_0-h,y; x) - M(t_0,y_0; x) + \int _{0} ^ {h} L(\psi(s), \dot \psi(s))ds \\
	& \geq v(t_0-h, y) - v(t_0, y_0) + \int _{0} ^ {h} L(\psi(s), \dot \psi(s))ds \\
	&= \int _{0} ^{h} - \frac{d}{ds} v (s,\psi (s)) + L(\psi(s), \dot \psi(s))ds \\
	& = \int _{0} ^{h} - \Big(v_t(s,\psi(s)) + \langle Dv(\psi(s)), \dot \psi (s) \rangle - L(\psi(s), \dot \psi(s))  
	\Big) ds \\
	&\geq \int_0^h - \Big( v_t(s,\psi(s)) + H(\psi(s), Dv(\psi(s))) \Big) ds. 
\end{align*}
We conclude that
\begin{align*}
	- \frac{\theta h}{2} \leq \int _{0} ^{h} v_t(s,\psi(s)) + H(\psi(s), Dv(\psi(s))) ds \leq  -\theta h,
\end{align*}
which is a contradiction. Thus, it must indeed hold that 
$$v_t(t_0, y_0) + H(y_0, Dv(t_0, y_0)) \geq 0.$$
This completes the proof of the first claim.

The proof of the second statement is completely analogous to the proof of Proposition \ref{prop:Fathi}(ii) and is therefore omitted.
\end{proof}

\bibliographystyle{plain}
\bibliography{references}

\end{document}